\numberwithin{equation}{section}
\newcommand{\bigpare}[1]{\bigl(#1\bigr)}
\newcommand{\Bigpare}[1]{\Bigl(#1\Bigr)}
\newcommand{\bigbra}[1]{\bigl\{#1\bigr\}}
\newcommand{\bigset}[2]{\bigl\{#1\bigm|#2\bigr\}}
\newcommand{\norm}[1]{\| #1 \|}
\newcommand{\bignorm}[1]{\bigl\| #1 \bigr\|}
\newcommand{\Bignorm}[1]{\Bigl\| #1 \Bigr\|}
\newcommand{\abs}[1]{| #1 |}
\newcommand{\bigabs}[1]{\bigl| #1 \bigr|}
\newcommand{\jap}[1]{\langle #1 \rangle}
\def\a{\alpha}
\def\b{\beta}
\def\c{\gamma}
\def\d{\delta}
\def\e{\varepsilon}
\def\f{\varphi}
\def\g{\psi}
\def\h{\hbar}
\def\k{\kappa}
\def\m{\mu}
\def\n{\nu}
\def\o{\omega}
\def\s{\sigma}
\def\t{\tau}
\def\x{\xi}
\def\th{\theta}
\renewcommand{\O}{\Omega}
\def\re{\mathbb{R}}
\def\co{\mathbb{C}}
\def\pa{\partial}
\renewcommand{\Re}{\text{{\rm Re}\;}}
\renewcommand{\Im}{\text{{\rm Im}\;}}
\newcommand{\supp}{\text{{\rm supp}\;}}
\newcommand{\dist}{\text{\rm dist}}
\newcommand{\res}{\text{\rm res}}
\newcommand{\inter}{\text{\rm int}}
\newcommand{\exter}{\text{\rm ext}}
\newcommand{\wV}{{\widetilde V}}
\DeclareMathOperator*{\slim}{s-lim}
\DeclareMathOperator{\sflow}{sf}
\DeclareMathOperator{\sign}{sign}
\newcommand{\Ran}{\text{\rm Ran\;}}
\newcommand{\Ker}{\mbox{{\rm Ker}}}
\newtheorem{thm}{Theorem}[section]
\newtheorem{lem}[thm]{Lemma}
\newtheorem{prop}[thm]{Proposition}
\newtheorem{cor}[thm]{Corollary}
\theoremstyle{definition}
\newtheorem{ass}{Assumption}
\theoremstyle{remark}
\newtheorem*{rem}{Remark}
\title[scattering matrix near resonant energies]{The spectrum of the scattering matrix near resonant energies in the semiclassical limit}
\author{Shu Nakamura}
\address{
Graduate School of Mathematical Science, 
University of Tokyo, Tokyo, Japan}
\email{shu@ms.u-tokyo.ac.jp}
\author{Alexander Pushnitski}
\address{Department of Mathematics,
King's College London, 
Strand, London, WC2R~2LS, U.K.}
\email{alexander.pushnitski@kcl.ac.uk}
\begin{document}

\begin{abstract}
The object of study in this paper is the on-shell 
scattering matrix $S(E)$ of the Schr\"odinger operator 
with the potential satisfying assumptions typical in the theory of 
shape resonances. We study the spectrum of $S(E)$ in the semiclassical limit
when the energy parameter $E$ varies 
from $E_\text{res}-\varepsilon$ to $E_\text{res}+\varepsilon$,
where $E_\text{res}$ is a real part of a resonance, and $\varepsilon$ 
is sufficiently small. 
The main result of our work describes the spectral flow of the 
scattering matrix through a given point on the unit circle. 
This result is closely related to the Breit-Wigner effect. 
\end{abstract}

\maketitle


\section{Introduction}


\subsection{The set-up}
We consider the Schr\"odinger operator
\begin{equation}
H=H_0+V, \quad H_0=-\h^2\Delta \quad \text{in }L^2(\re^d), \quad d\geq 2,
\label{a0}
\end{equation}
where $\h\in (0,1)$ is the Planck constant and the potential $V=V(x)$ 
satisfies the short-range  condition
\begin{equation}\label{eq:Short-Range-0}
|V(x)|\leq C(1+|x|)^{-\rho}, \quad x\in\re^d,
\end{equation}
with $\rho>1$. 
We will be interested in the semiclassical regime $\h\to+0$, although
the dependence of various operators on $\h$ will be suppressed in our notation. 
For $E>0$ we define the classically accessible region by
\[
\mathcal{G}(E)= \bigset{x\in\re^d}{V(x)< E}
\]
and write
\[
\mathcal{G}(E)= \mathcal{G}^\inter(E)\cup \mathcal{G}^\exter(E), 
\]
where $\mathcal{G}^\exter(E)$ is the unbounded connected component of 
$\mathcal{G}(E)$, and $\mathcal{G}^\inter(E)$ is the union of all other connected 
components. 

In Section~\ref{sec1.2}, we describe our assumptions on $V$;  these are typical for the theory
of shape resonances.
In particular, we require that for some $E_0>0$ the interior domain $\mathcal{G}^\inter(E_0)$ 
is non-empty and that for all energies $E$ in a neighbourhood $\Delta=(E_0-\delta,E_0+\delta)$ of $E_0$ the potential
$V$ is non-trapping in $\mathcal{G}^\exter(E)$; see Assumption~B below and figure~1. 

If it were not for the quantum mechanical tunnelling, the quantum particle with an energy $E\in\Delta$ 
would not be able to penetrate the potential barrier separating $\mathcal{G}^\inter(E)$ from $\mathcal{G}^\exter(E)$. 
Thus, the particle would 
be either confined to the domain $\mathcal{G}^\inter(E)$
or experience scattering in the domain $\mathcal{G}^\exter(E)$.
The particles confined to $\mathcal{G}^\inter(E)$  would  then generate bound states 
with positive energies. Due to tunnelling, these bound states in fact become resonances
with exponentially small (in the semiclassical regime) imaginary part; see e.g. \cite{CDKS,HS,HiS,Na2,GS,Na3,MRS}.
Resonances produced in this way are called \emph{shape resonances}. 

Our purpose is to study the spectrum of the scattering matrix for the pair $H_0,H$ for energies near 
the real parts of shape resonances. In order to locate these resonances,
we use the following standard technique: we define an auxiliary Hamiltonian $H^\inter=H_0+V^\inter$
whose eigenvalues coincide (up to an exponentially small error $O(e^{-c/\h})$) with the real parts of 
shape resonances. 
The potential $V^\inter$ is defined such that $V^\inter(x)=V(x)$  in $\mathcal{G}^\inter(E_0)$ 
and $V^\inter(x)\geq E_+>E_0$ in $\re^d\setminus  \mathcal{G}^\inter(E_0)$; 
the precise assumptions are listed in Section~\ref{sec1.2}, but to get an
at-a-glance idea of our construction, the reader is advised to take a look at 
figure 1. 
We will call the positive eigenvalues of $H^\inter$ the \emph{resonant energies}. 
As mentioned above, under additional assumptions one can prove that 
for each resonant energy $E_\res$ there exists a resonance $\mathcal E_\res$ 
of $H$ with $\abs{E_\res-\Re \mathcal E_\res}$ and $\abs{\Im \mathcal E_\res}$ exponentially
small in the semiclassical regime, see \cite{CDKS,HS,HiS,Na2,Na3,MRS}.
However, it is technically convenient for us to work with real resonant energies $E_\res$
rather than with complex  resonances $\mathcal E_\res$. Thus, although resonances provide motivation 
for our work and are key to interpreting our results, we will say nothing about them 
and instead refer to resonant energies. 
In fact (although this is merely a technical point) 
we do not assume that the resolvent of $H$ admits an analytic continuation 
and so the existence of resonances under our assumptions cannot be guaranteed; 
see \cite{MRS} for a detailed analysis of this issue. 

\begin{rem}
An alternative way to construct $H^\inter$, used e.g. in \cite{CDKS}, is to impose a
Dirichlet boundary condition that decouples the domains $\mathcal{G}^\inter(E_0)$
and $\mathcal{G}^\exter(E_0)$ and then to define $H^\inter$ as the Hamiltonian 
corresponding to the interior domain. 
We find it more convenient to work with the Hamiltonian $H^\inter$ defined on the whole
space. 
\end{rem}

\begin{figure}[h]
\begin{center}
\hspace*{-0.5cm}
\includegraphics[width=16cm]{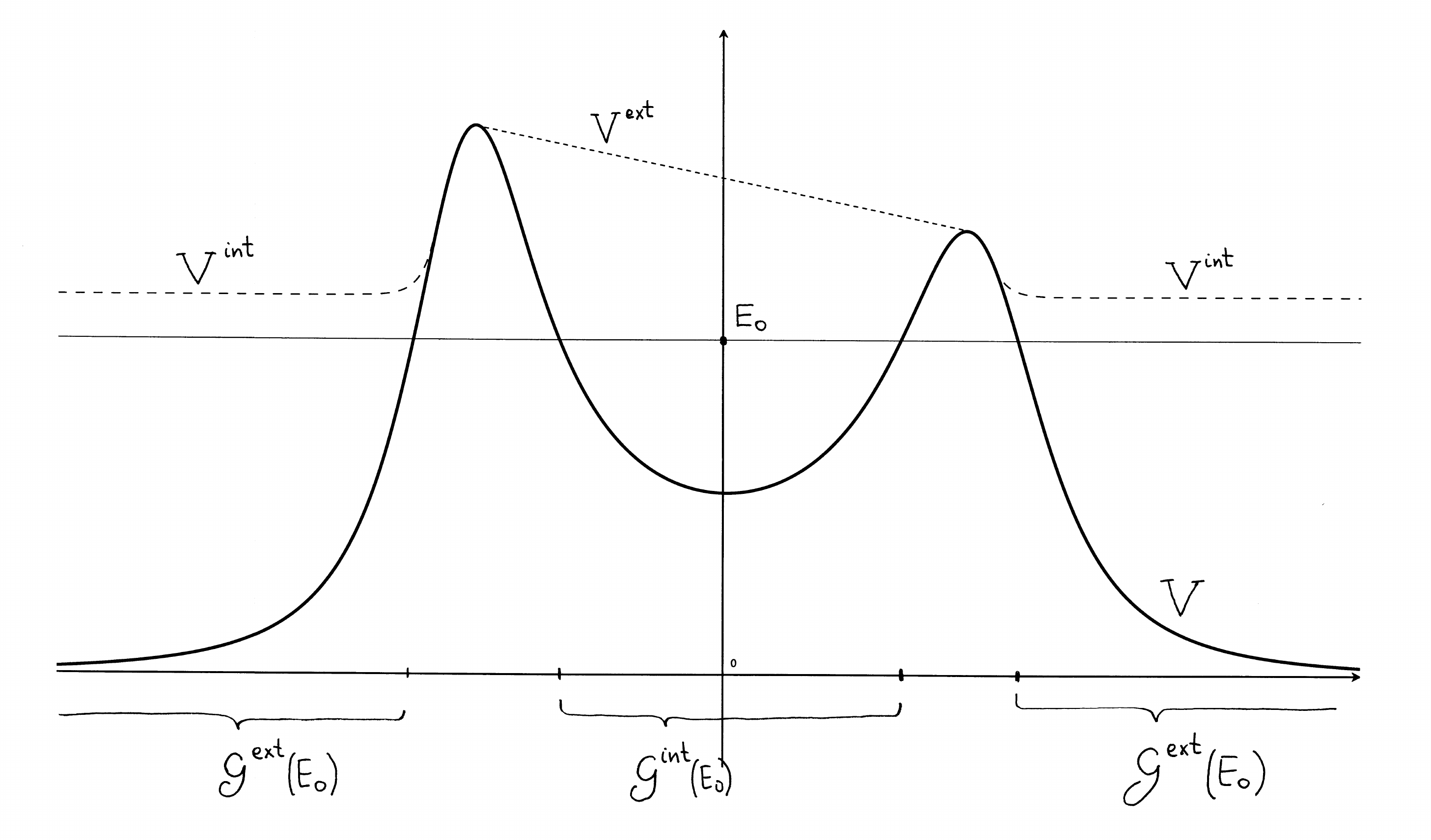}
\end{center}

\caption{Illustrative graph of $V(x)$, $V^\inter(x)$, $V^\exter(x)$}
\end{figure}

Besides $H^\inter$, we also define the Hamiltonian $H^\exter=H_0+V^\exter$, where 
the potential $V^\exter$ coincides with $V$ on $\mathcal{G}^\exter(E_0)$ but is globally
non-trapping, so the domain $\{x\mid V^\exter(x)<E\}$ has no bounded 
connected component; see figure 1. It turns out that away from the 
resonant energies, the scattering matrix for the pair $H,H_0$ is exponentially
close to the scattering matrix for the pair $H^\exter,H_0$, see Proposition~\ref{S-matrix-error-prop}.
We will use the pair $H^\exter,H_0$ as a reference system which has ``almost'' 
the same scattering characteristics as $H,H_0$, but no resonances. 

\subsection{The Breit-Wigner effect}

The main object of this paper is the spectrum of the scattering matrix
$S(E)=S(E;H,H_0)$, where $E>0$ varies near resonant energies.  
We recall the precise definition of the scattering matrix in Section~\ref{sec4.1}. 
The scattering matrix $S(E)$ is a unitary operator 
on $L^2(\mathbb{S}^{d-1})$ and the difference 
$S(E)-I$ is compact, where $I$ is the identity operator. 
Thus, the spectrum of $S(E)$ consists of eigenvalues 
on the unit circle, and the multiplicities of all eigenvalues 
(apart from possibly 1) are finite. 
These eigenvalues may accumulate only to $1$. 
We denote the eigenvalues of $S(E)$, enumerated with multiplicities taken into account, 
by $\bigbra{e^{i\th_n(E)}}_{n=1}^\infty$. 
The scattering matrix $S(E)$ depends continuously on the energy $E>0$ in the operator norm.

If the potential satisfies the short range condition \eqref{eq:Short-Range-0} with $\rho>d$, 
then one can define the spectral shift function $\x(E)=\x(E;H,H_0)$; see, e.g., \cite{BY}
for an introduction to the spectral shift function theory. 
For $E<0$, the spectral shift function coincides with $-N((-\infty,E);H)$;
here and it what follows $N(X,H)$ denotes 
the number of eigenvalues of $H$ in the interval $X$. 
Thus, if $E<0$ is an eigenvalue of $H$ of multiplicity $m$, then 
$\xi$ has a discontinuity at $E$:
$\xi(E+0)-\xi(E-0)=-m$.

For $E>0$, 
the spectral shift function $\x(E)$ is continuous in $E$ and is 
related to the scattering matrix by the Birman-Krein formula 
\begin{equation}
\det S(E)= e^{-2\pi i \x(E)}, \quad E>0.
\label{BK}
\end{equation}
This formula can be equivalently written in terms of the eigenvalues 
$\{e^{i\th_n(E)}\}_{n=1}^\infty$ of $S(E)$ as 
\begin{equation}\label{eq:SSF-EV-1}
\x(E) =-\frac{1}{2\pi} \sum_{n=1}^\infty \th_n(E) \quad (\mbox{mod } 1).
\end{equation}

Suppose $E$ grows monotonically. Then, as $E$ crosses a resonant
energy $E_\res>0$ of multiplicity $m\geq1$, the spectral shift function $\xi(E)$
experiences an exponentially fast (in the semiclassical regime) increment
by $(-m)$:
\begin{prop}[\cite{Na1}]\label{prp.na1}
Let Assumptions~A and B (see Section~\ref{sec1.2}) 
hold true with $\rho>d$ in \eqref{eq:Short-Range-1}, and let $H^\exter=H_0+V^\exter$ 
and $H^\inter=H_0+V^\inter$ be as stipulated in Section~\ref{sec1.2a}.
Then there exist positive constants  $\a$, $\b$, $\d$ such that for all $E$ satisfying 
$|E-E_0|<\d$ and $\dist(E,\s(H^\inter))>e^{-\b/\h}$ one has 
\[
\bigabs{\x(E;H,H_0)-\x(E;H^\exter,H_0)+ N((-\infty,E);H^\inter)} \leq C e^{-\a/\h}.
\]
\end{prop}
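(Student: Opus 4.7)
The plan is to reduce the estimate to a statement about a single spectral shift function via Birman's additivity, and then to establish it by the standard decoupling argument for shape resonances, driven by Agmon-type semiclassical resolvent estimates in the classically forbidden barrier separating $\mathcal{G}^{\inter}(E_0)$ from $\mathcal{G}^{\exter}(E_0)$.

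Under the assumption $\rho>d$ all three SSFs $\x(\cdot;H,H_0)$, $\x(\cdot;H^{\exter},H_0)$ and $\x(\cdot;H,H^{\exter})$ are simultaneously well defined, and the chain rule yields
\[
\x(E;H,H_0)-\x(E;H^{\exter},H_0) = \x(E;H,H^{\exter}).
\]
The goal thus reduces to
\[
\bigabs{\x(E;H,H^{\exter})+N((-\infty,E);H^{\inter})} \leq Ce^{-\a/\h}.
\]
The left-hand side is rewritten via Krein's determinant formula as the imaginary part of $\log\det$ of an operator built from the perturbation $V-V^{\exter}$, which by construction is supported in (a neighbourhood of) $\mathcal{G}^{\inter}(E_0)$; on the right, since $V^{\inter}\geq E_+$ outside $\mathcal{G}^{\inter}(E_0)$, the operator $H^{\inter}$ has only discrete spectrum in $(-\infty,E_+)$ and $N((-\infty,E);H^{\inter})$ is simply the eigenvalue count of $H^{\inter}$ below $E$.

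Two semiclassical inputs then drive the argument. First, Assumption~B makes $V^{\exter}$ non-trapping at energy $E_0$, which delivers polynomial-in-$\h^{-1}$ bounds on weighted resolvents of $H^{\exter}$ at real energies in a neighbourhood of $E_0$. Second, Agmon-type estimates in the classically forbidden barrier $\{V\geq E_0\}$ give exponential decay of order $e^{-S/\h}$, with $S$ arbitrarily close to the Agmon distance across the barrier, for the integral kernels of $(H^{\exter}-E-i0)^{-1}$ and of $(H^{\inter}-E)^{-1}$ taken between the interior well and the exterior. Choosing $\b$ strictly smaller than $S$ ensures that on the set $\dist(E,\s(H^{\inter}))>e^{-\b/\h}$ the interior resolvent is bounded by $e^{\b/\h}$, which is beaten by the Agmon decay. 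A partition-of-unity argument combined with second-resolvent identities then shows that, modulo an $O(e^{-\a/\h})$ error in a weighted trace norm, $H$ decouples into $H^{\inter}$ effectively acting on the interior and $H^{\exter}$ effectively acting on the exterior. Substituting this decomposition into the Krein formula produces, on top of the trivial identity $\x(E;H^{\exter},H^{\exter})=0$, precisely one jump of size $-1$ for each eigenvalue of $H^{\inter}$ below $E$, with an exponentially small remainder.

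The main obstacle is the interplay between the two exponential scales: the resolvent blow-up $e^{\b/\h}$ near eigenvalues of $H^{\inter}$ must be strictly dominated by the barrier decay $e^{-S/\h}$, so one needs $\b<S$ with room to spare, and all error constants must be tracked uniformly as $E$ sweeps across the gap $\dist(E,\s(H^{\inter}))>e^{-\b/\h}$. This uniformity, together with the semiclassical propagation estimates that underpin the Agmon decay, is the genuinely technical part of the proof.
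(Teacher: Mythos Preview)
The paper does not contain a proof of this proposition: it is quoted from \cite{Na1} as background and motivation for the paper's own results, which concern the counting function $\mu$ rather than the spectral shift function $\xi$. So there is no ``paper's own proof'' to compare against.

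That said, your sketch is a faithful outline of the argument in \cite{Na1}. The reduction via the chain rule $\xi(E;H,H_0)-\xi(E;H^{\exter},H_0)=\xi(E;H,H^{\exter})$ is the natural first step, and the remaining estimate is indeed obtained by a decoupling argument combining (i) the non-trapping resolvent bound for $H^{\exter}$ (Proposition~\ref{prop5.1} in the present paper) with (ii) Agmon tunnelling estimates across the barrier (Proposition~\ref{prop5.2}). Your identification of the main technical point --- that the exponent $\beta$ controlling the excluded neighbourhoods of $\sigma(H^{\inter})$ must be taken strictly smaller than the Agmon action $S$ across the barrier, so that the resolvent blow-up $e^{\beta/\h}$ is beaten by the tunnelling decay $e^{-S/\h}$ --- is exactly right.

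It is worth noting that the present paper's own machinery runs parallel to this but is organised differently: rather than working with $\xi$ and Krein's determinant, the paper works with the eigenvalue counting function $\mu$ of the scattering matrix and proves the analogous two-sided estimate (Theorem~\ref{main-theorem}) under the weaker hypothesis $\rho>1$. The key analytic inputs are the same (Lemmas~\ref{tunnel-est-lem-1}, \ref{counting-eq-lem}, \ref{non-reso-S-mat-lem}), but the reduction goes through the stationary representation of the scattering matrix and a Birman--Schwinger identity (Proposition~\ref{BS-count-prop}) rather than through trace-class SSF theory. Under $\rho>d$ the two statements are essentially equivalent via the Birman--Krein formula.
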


Moreover, since $V^\exter$ is non-trapping, 
the behaviour of $\x(E;H^\exter,H_0)$ near $E=E_0$ is well understood, with an asymptotic 
expansion in powers of $\h$ (\cite{RT1, RT2}). Thus, $\x(E;H,H_0)$ can be approximated by a sum 
of the smooth component $\x(E;H^\exter,H_0)$ and a {\em step}\/ component $-N((-\infty,E);H^\inter)$. 
Proposition~\ref{prp.na1} is one of the alternative ways of describing 
the \emph{Breit-Wigner effect}; see \cite[Section~134]{Landau-Lifshits}
or \cite[Chapter~12]{Newton} for a physics discussion or
\cite{GMR,Na2} for precise mathematical results. 
We emphasise that the mathematical description of 
the Breit-Wigner effect requires the trace class assumption $\rho>d$
in \eqref{eq:Short-Range-0},
since the spectral shift function is only defined in the trace 
class framework. On the other hand, the scattering matrix 
is well defined under the assumption $\rho>1$.

Since $\xi(E)$ experiences a fast ``jump'' at resonant energies, 
formula \eqref{eq:SSF-EV-1} suggests that some of the phases $\theta_n(E)$
also experience jumps near $E_\res$. This leads to the following questions:
\begin{enumerate}[(i)]
\item
What is the behaviour of individual phases $\theta_n(E)$ near resonant energies?
\item
Can one observe some version of the Breit-Wigner effect outside the trace class scheme 
by looking at the phases $\theta_n(E)$? 
\end{enumerate}

We attempt to answer these questions, at least partially, in this paper. 
We study the behaviour of the phases $\{\theta_n(E)\}_{n=1}^\infty$
outside the trace class scheme (i.e. under the assumption 
$\rho>1$ in \eqref{eq:Short-Range-0})
when $E$ varies near resonant energies. 
In spectral theory it is often more convenient to study an eigenvalue
counting function instead of individual eigenvalues. 
This turns out to be the case in our problem: instead of looking at 
individual eigenvalues $e^{i\theta_n(E)}$ of the scattering matrix $S(E)$, we study a certain version of 
the eigenvalue counting function, known as the spectral flow. 
Our main result, Theorem~\ref{main-thm}, says, roughly speaking, that
when $E$ increases monotonically from $E_\res-\e$ to $E_\res+\e$, 
where $E_\res$ is a resonant energy 
of multiplicity $m\geq1$ 
and $\e>0$ is exponentially small,  the spectral flow of $S(E)$ through ``most'' points
$e^{i\theta}$ on the unit circle equals $m$. This means that the number
of eigenvalues of $S(E)$ that cross $e^{i\theta}$ anti-clockwise 
minus 
the number 
of eigenvalues of $S(E)$ that cross $e^{i\theta}$ clockwise
equals $m$.

We expect that the main conclusions of our work hold true also in other models where 
resonances are present close to the real axis. The semiclassical set-up for us is 
simply a particular mechanism which produces isolated resonances with a small imaginary part.

\subsection{Acknowledgement}
The authors are grateful to N.~Filonov for a careful critical reading of 
the manuscript and for making a number of useful suggestions.
A.P. is grateful to the Graduate School of Mathematical Science, 
University of Tokyo, 
for hospitality during April 2011.

\section{Main result}\label{sec1a}

\subsection{Assumptions}\label{sec1.2}
Let $H_0$, $H$ be as in \eqref{a0}. 
We will need a version of the short-range condition \eqref{eq:Short-Range-0} 
which involves also the derivatives of $V$: 

\begin{ass}
$V\in C^\infty(\re^d)$ is a real-valued function such that for some $\rho>1$ and any 
multi-index $\a$, 
\begin{equation}\label{eq:Short-Range-1}
\bigabs{\pa_x^\a V(x)}\leq C_\a\jap{x}^{-\rho-|\a|}, \quad x\in\re^d, 
\end{equation}
where $\jap{x}=(1+|x|^2)^{1/2}$. 
\end{ass}

Next, we make a standard non-trapping assumption, 
cf. e.g. \cite{RT1,RT2}.
 Let $(x(t;y,v))_{t\in\re}$ be the solution to the 
Newton equation: 
\begin{align*}
&\ddot x(t;y,v) =-2\nabla V(x(t;y,v)), \\
&x(0;y,v) =y, \quad \dot x(0;y,v)=v.
\end{align*}

\begin{ass}
(i) $\mathcal{G}^\inter(E_0)\neq \varnothing$. \newline
(ii) There exists a neighborhood $\Delta=(E_0-\delta,E_0+\delta)$ of $E_0$ such that all energies $E\in \Delta$ 
are non-trapping in $\mathcal{G}^\exter(E)$ 
in the sense of Robert-Tamura, i.e., for any $R>0$ there is $T>0$ such that if 
\[
y\in \mathcal{G}^\exter(E), \quad |y|<R, \quad \abs{v}^2+V(y)=E, 
\]
then 
\[
|x(t;y,v)|\geq R \quad \text{for }|t|\geq T.
\]
\end{ass}

\subsection{The Hamiltonians $H^\inter$ and $H^\exter$. Resonant energies}\label{sec1.2a}

In order to state our results, we need to introduce two auxiliary Hamiltonians, $H^\inter$ and $H^\exter$. 
Let $\O_1$, $\O_2\subset \re^d$ be open sets such that 
\[
\mathcal{G}^\inter(E_0)\Subset \O_1\Subset \O_2\Subset (\re^d\setminus \mathcal{G}^\exter(E_0)),
\]
where $\O_1\Subset \O_2$ means, as usual, that $\overline{\O}_1\subset \O_2$ 
($\overline{\O}_1$ is the closure of $\O_1$). 
Let us fix $E_+>E_0$ sufficiently close to $E_0$ such that 
\begin{equation}\label{eq:class-forbid-ass}
\inf\bigset{V(x)}{x\in \O_2\setminus \O_1} >E_+.
\end{equation}
Next we choose $V^\inter$ and $V^\exter \in C^\infty(\re^d)$ so that 
(see fig.~1)
\begin{align*}
& V^\exter(x)=V(x), \quad \text{if }x\in\re^d\setminus \O_1, \\
& V^\exter(x)\geq V(x) \quad \text{everywhere}, \\
&V^\exter(x)\geq E_+'> E_+, \quad \text{if }x\in \O_1, \\
&V^\inter(x)=V(x), \quad \text{if }x\in \O_2, \\
&V^\inter(x)\geq E_+'>E_+, \quad\text{if } x\in \re^d\setminus \O_2.
\end{align*}
We assume $V^\inter$ to be bounded; in fact, we may assume $V^\inter$ to be constant
(greater than $E_+$) outside a compact set. We set 
\[
H^\exter=H_0+V^\exter, \quad H^\inter=H_0+V^\inter.
\]
By our assumptions, $H^\inter$ has only discrete spectrum in the interval $(-\infty,E_+)$. 
We will call the eigenvalues of $H^\inter$ in this interval the {\em resonant energies}\/  for $H$. 
Since the above definitions do not uniquely specify $V^\inter$, the resonant energies are not uniquely 
defined. The following statement shows, however, 
that the discrepancy between different definitions of 
resonant energies is exponentially small in the semiclassical limit $\h\to0$.

\begin{prop}\label{prop.res-error}
Let $V^\inter_j$, $j=1,2$, be two choices of the potential $V^\inter$, satisfying the above 
assumptions. 
For $j=1,2$, 
let $E_1^{(j)}\leq E_2^{(j)}\leq\cdots$ 
be the eigenvalues of $H_0+V_j^\inter$ in the interval $(-\infty,E_+)$, 
listed in non-decreasing order with multiplicities taken into account. 
Then there exists $\n>0$ such that for all $n$ and for all sufficiently 
small $\h>0$, the estimate 
\begin{equation}
\abs{E_n^{(1)}-E_n^{(2)}}\leq e^{-\n/\h}
\label{eq:exp}
\end{equation}
holds true. 
\end{prop}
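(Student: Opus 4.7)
My plan is to show that the eigenvalues are exponentially close by combining Agmon-type exponential decay estimates for the eigenfunctions of $H_0+V_j^\inter$ with the min-max principle. The guiding observation is that $V_1^\inter$ and $V_2^\inter$ agree on $\O_2$ and differ only on $\re^d\setminus\O_2$, a set on which $V_j^\inter\geq E_+'>E_+$ and which is therefore classically forbidden for every eigenvalue $E\leq E_+$ of either operator.

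First, I would establish the following uniform Agmon estimate: there exist $\n>0$ and $C$ such that for every $j\in\bra{1,2}$, every $\h\in(0,1)$ and every normalized eigenfunction $u$ of $H_0+V_j^\inter$ with eigenvalue $E\leq E_+$, one has
\[
\norm{u}_{L^2(\re^d\setminus \O_2)}+\h\norm{\nabla u}_{L^2(\re^d\setminus \O_2)}\leq Ce^{-\n/\h}.
\]
The uniform lower bound $V_j^\inter-E\geq E_+'-E_+>0$ on $\re^d\setminus \O_2$ forces the Agmon weight $\sqrt{(V_j^\inter-E)_+}$ to be bounded below by a positive constant on a neighbourhood of this set, uniformly in $E\leq E_+$. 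The estimate then follows by the standard IMS/Agmon argument: one tests the eigenvalue equation against $e^{2\f/\h}u$ with a Lipschitz weight $\f$ that vanishes on $\O_1$ and grows like a positive multiple of the Agmon distance on $\re^d\setminus\O_2$.

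Second, fix $n$ and let $u_1,\dots,u_n$ be orthonormal eigenfunctions of $H_0+V_1^\inter$ for $E_1^{(1)}\leq\cdots\leq E_n^{(1)}\leq E_+$. Since $V_2^\inter-V_1^\inter$ is bounded and supported in $\re^d\setminus\O_2$, the previous step gives
\[
\bigabs{\bigjap{u_k,(V_2^\inter-V_1^\inter)u_l}}\leq Ce^{-\n/\h},\quad k,l=1,\dots,n,
\]
after absorbing a factor of two into $\n$. Hence, in the basis $\bra{u_k}$ the matrix of $H_0+V_2^\inter$ equals $\mathrm{diag}(E_1^{(1)},\dots,E_n^{(1)})$ plus an $n\times n$ matrix of norm $\leq nCe^{-\n/\h}$; since $n=O(\h^{-d})$ by standard Weyl counting, this error is still of order $e^{-\n'/\h}$ for any $\n'<\n$. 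The min-max principle then yields $E_n^{(2)}\leq E_n^{(1)}+Ce^{-\n'/\h}$, and swapping the roles of the two potentials gives the opposite bound, establishing \eqref{eq:exp}.

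The main obstacle I foresee is securing uniformity of the Agmon estimate in the eigenvalue $E$ (equivalently, in $n$), rather than for a single fixed $E$. This uniformity is ensured precisely by the strict separation $E_+'>E_+$ built into the construction of $V^\inter$ in Section~\ref{sec1.2a}, which keeps the Agmon weight bounded away from $0$ on $\re^d\setminus\O_2$. The remaining polynomial $\h$-losses, coming from the basis size $n$ and from standard semiclassical elliptic regularity in controlling the gradient term, can always be absorbed into a slightly smaller positive exponent.
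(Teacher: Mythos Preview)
Your argument is correct and follows the classical ``Agmon decay plus min--max'' route, which is different from the paper's proof. The paper (Appendix~\ref{app.c}) works entirely at the level of resolvents: it introduces auxiliary potentials $\wV_j\geq E_+'$ (so that $(H_0+\wV_j-z)^{-1}$ is uniformly bounded), uses a tunnelling estimate for resolvents to show that $\norm{(H_0+V_1^\inter-z)^{-1}-(H_0+V_2^\inter-z)^{-1}}$ is bounded when $z$ stays at distance $\geq e^{-\n/\h}$ from $\s(H_0+V_1^\inter)$, and then invokes an abstract lemma (Lemma~\ref{lma.appc}) together with a continuous deformation from $V_1^\inter$ to $V_2^\inter$ to conclude that the ordered eigenvalues match up to $e^{-\n/\h}$. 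Your approach instead exploits directly that eigenfunctions with eigenvalue $E\leq E_+$ are exponentially small on $\re^d\setminus\O_2$, which is exactly where $V_1^\inter$ and $V_2^\inter$ differ, and then feeds this into the variational characterisation of $E_n^{(2)}$.

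Both routes ultimately rest on the same analytic input (Agmon/tunnelling decay through the barrier $\O_2\setminus\O_1$, where $V-E_+$ is uniformly positive by \eqref{eq:class-forbid-ass}) and both deliver the same explicit lower bound for $\n$, namely the Agmon distance from $\O_1$ to $\O_2^c$ at energy $E_+$. Your argument is more elementary and self-contained; the paper's resolvent approach is more in keeping with the machinery used elsewhere in the paper and avoids having to track the polynomial loss from the basis size $n=O(\h^{-d})$. Two small remarks on your write-up: the gradient bound $\h\norm{\nabla u}_{L^2(\re^d\setminus\O_2)}$ is not actually needed, since $V_2^\inter-V_1^\inter$ is a bounded multiplication operator; and in the min--max step you only need the diagonal quadratic form values $\jap{u,(V_2^\inter-V_1^\inter)u}$ on the trial subspace, not the full matrix, which slightly simplifies the bookkeeping.
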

For the proof, see Appendix~\ref{app.c}.
We note that a lower bound for the  constant $\n>0$ in the estimate
\eqref{eq:exp} is explicitly given 
by the Agmon distance between $\O_1$ 
and $\O_2^c$ at the energy $E_+$.

In the problem we are discussing one has to keep in mind two scales as $\h\to0$:
the power scale and the exponential scale. Indeed, the number
of eigenvalues of $H^\inter$ on $(-\infty,E_+)$ grows as $\h^{-d}$. 
On the other hand, our results below are
valid for energies in $\Delta$ outside exponentially small neighbourhoods
of resonant energies.
More precisely, we will consider the energies
$E\in\Delta$ which satisfy $\dist(E,\s(H^\inter))> e^{-\alpha/\h}$ for some
$\alpha>0$. 
Thus, the Lebesgue measure of the set
\[
\bigset{E\in\Delta}{\dist(E,\s(H^\inter))\leq e^{-\alpha/\h}}
\]
that we exclude from the interval $\Delta$  
is exponentially small as $\h\to 0$. 
The fact that we have to exclude exponentially small neighbourhoods 
of resonant energies $E_\res$ is a reflection of the effect that the 
resonances $\mathcal E_\res$ of $H$ are exponentially close to $E_\res$,
see e.g. \cite{HS,CDKS,Na2}.

\subsection{The scattering matrix}
The following preliminary result (which is not really new, cf. \cite{Na2}) shows that away from the resonant energies 
the scattering matrices $S(E;H,H_0)$ and $S(E;H^\exter, H_0)$ are exponentially close to each other: 

\begin{prop}\label{S-matrix-error-prop}
There exist positive constants $C$, $\a$, $\d$ such that for all $E$ satisfying $|E-E_0|<\d$ and 
$\dist (E,\s(H^\inter))> e^{-\alpha/\h}$, one has 
\begin{equation}\label{eq:Smatrix-difference-1}
\norm{S(E;H^\exter,H_0)-S(E;H,H_0)}\leq C e^{-\a/\h},
\end{equation}
for all sufficiently small $\h>0$. 
\end{prop}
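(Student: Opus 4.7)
The plan is to reduce \eqref{eq:Smatrix-difference-1} to a single-scattering-system estimate and then control the resulting expression by two tunnelling bounds. The starting point is the chain rule
\[
S(E;H,H_0)=S(E;H,H^\exter)\,S(E;H^\exter,H_0),
\]
which together with unitarity of $S(E;H^\exter,H_0)$ reduces the problem to bounding $\|S(E;H,H^\exter)-I\|$. Since $W:=V-V^\exter$ is supported in $\overline{\O_1}$, the pair $(H,H^\exter)$ is a compactly supported perturbation, and the standard stationary representation yields
\[
S(E;H,H^\exter)-I=-2\pi i\,\C^\exter(E)\,\bigbrac{W-W\,R_H(E+i0)\,W}\,\C^\exter(E)^*,
\]
where $\C^\exter(E)$ denotes the spectral trace operator of $H^\exter$ at energy $E$, characterised by $\C^\exter(E)^*\C^\exter(E)=(2\pi i)^{-1}\bigpare{R^\exter(E+i0)-R^\exter(E-i0)}$ and $R^\exter(z):=(H^\exter-z)^{-1}$.

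Choose a cut-off $\chi\in C_c^\infty(\re^d)$ with $\chi\equiv 1$ on $\overline{\O_1}$ and $\supp\chi\Subset\O_2\setminus\mathcal{G}^\exter(E_0)$, so that $W=\chi W\chi$. Two independent estimates will then suffice. The \emph{exterior} one exploits that $V^\exter\geq E_+>E_0$ on $\supp\chi$, so for $E$ close to $E_0$ this set is classically forbidden for $H^\exter$. Combining the Robert--Tamura non-trapping limiting-absorption bound $\|\jap{x}^{-s}R^\exter(E+i0)\jap{x}^{-s}\|=O(\h^{-1})$ with standard Agmon-type tunnelling estimates produces
\[
\bignorm{\chi\,\Im R^\exter(E+i0)\,\chi}\leq Ce^{-2\a/\h}, \qquad \text{hence}\qquad \bignorm{\C^\exter(E)\chi}\leq Ce^{-\a/\h}.
\]
The \emph{interior} estimate is the bound
\[
\bignorm{\chi\,R_H(E\pm i0)\,\chi}\leq Ce^{\a'/\h}
\]
under the hypothesis $\dist(E,\s(H^\inter))>e^{-\a'/\h}$, obtained by a Grushin/Feshbach-type decoupling of $H$ along $\pa\O_1$ that compares $\chi R_H(E+i0)\chi$ with $(H^\inter-E)^{-1}$ modulo exponentially small coupling errors. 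Choosing $\a'<2\a$ from the start and feeding both bounds into the stationary representation gives
\[
\bignorm{\C^\exter(E)\bigbrac{W-WR_H(E+i0)W}\C^\exter(E)^*}\leq Ce^{-(2\a-\a')/\h},
\]
which is \eqref{eq:Smatrix-difference-1} after relabelling the constant.

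The technical heart of the argument, and the main obstacle, is the interior resolvent bound. A priori, $R_H(E\pm i0)$ can blow up near the genuine (complex) resonances of $H$, and our hypothesis controls only the distance from $\s(H^\inter)$. The required estimate relies on the fact that the coupling across $\pa\O_1$ is itself exponentially small---since $\pa\O_1$ lies in a classically forbidden region at energy $E_0$---so that the resonances of $H$ are exponentially close to the eigenvalues of $H^\inter$, in the spirit of \cite{HS,CDKS,Na2,MRS}. This is the same circle of ideas that underlies Proposition~\ref{prop.res-error}, and implementing it quantitatively as a cut-off resolvent bound is where the bulk of the work lies.
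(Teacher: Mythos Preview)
Your outline is correct in principle, but it takes a genuinely different route from the paper and, as you yourself flag, defers the hardest step.

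The paper's argument also starts from the chain rule \eqref{eq:Smatrix-chain-rule} and reduces to estimating $\norm{S(E;H,H^\exter)-I}$, but it \emph{never touches the full resolvent $R_H(E+i0)$}. Instead of the direct stationary representation you use (with the $W-WR_HW$ kernel), the paper exploits the symmetrised form \eqref{eq:S-mat-rep-1a}, which for the pair $(H,H^\exter)$ involves only $R^\exter(E+i0)$ through the operators $A^\exter(E)=\Re\bigpare{\sqrt{V_0}R^\exter(E+i0)\sqrt{V_0}}$ and $B^\exter(E)=\Im\bigpare{\sqrt{V_0}R^\exter(E+i0)\sqrt{V_0}}$. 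The spectral identity \eqref{eq:S-mat-kernel} of Proposition~\ref{BS-count-prop} then says that $e^{i\th}\in\s(S(E;H,H^\exter))$ iff $1\in\s\bigl(A^\exter(E)+\cot(\th/2)B^\exter(E)\bigr)$; this immediately yields Lemma~\ref{S-mat-bound-lem}, namely $\norm{S-I}\leq 2a^{-1}\norm{B^\exter}$ with $a=\dist(1,\s(A^\exter))$. The tunnelling estimates of Lemma~\ref{tunnel-est-lem-1} give $\norm{B^\exter}\leq e^{-2\a/\h}$ (equivalent to your exterior bound) and show $A^\exter(E)$ is exponentially close to $\sqrt{V_0}(H^\inter+V_0-E)^{-1}\sqrt{V_0}$. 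Finally, the algebraic identity \eqref{eq:counting-eq-lem-3} converts the hypothesis $\dist(E,\s(H^\inter))>e^{-\a/\h}$ directly into a lower bound $a\geq c\,e^{-\a/\h}$, with no Grushin reduction required.

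So the comparison is: your approach needs a cut-off resolvent bound for $H$ itself, which is obtainable via a Feshbach/Grushin argument but is a substantial piece of analysis (essentially equivalent to locating the resonances of $H$ near $\s(H^\inter)$). The paper circumvents this completely by a Birman--Schwinger-type recasting that trades $R_H$ for $R^\exter$ and $(H^\inter+V_0-E)^{-1}$, both of which are elementary to control. Your route would work and is closer to the older shape-resonance literature you cite, but it is heavier; the paper's route is shorter and more self-contained within the framework it has already set up for the main theorem. A minor technical point: the chain rule as stated in the paper involves $\widetilde S(E;H,H^\exter)$, a unitary conjugate of $S(E;H,H^\exter)$, rather than $S(E;H,H^\exter)$ itself; this is harmless for norm estimates since the two are unitarily equivalent.
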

The proof follows directly from Lemma~\ref{non-reso-S-mat-lem}
and the representation \eqref{eq:Smatrix-chain-rule} below. 
Proposition~\ref{S-matrix-error-prop}, in particular, immediately implies that 
away from resonant energies the scattering matrix 
$S(E;H^\exter,H_0)$ is independent of the choice of $V^\exter$ up to an 
exponentially small error.

The next preliminary result shows that $S(E;H^\exter,H_0)$ varies sufficiently slowly:
\begin{prop}\label{Smatrix-Hoelder}
There exist positive constants $C$, $\c$, $\d$ such that if $E_1,E_2\in (E_0-\d,E_0+\d)$ then 
\begin{equation}\label{eq:Smatrix-Hoelder}
\norm{S(E_1;H^\exter,H_0)-S(E_2;H^\exter,H_0)} \leq C \h^{-2-\c} |E_1-E_2|^\c, 
\quad \h\in (0,1].
\end{equation}
\end{prop}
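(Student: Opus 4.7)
The strategy is to establish a Lipschitz-type bound on $E \mapsto S(E;H^\text{ext},H_0)$ with some polynomial loss in $\hbar^{-1}$, and then to interpolate against the trivial uniform bound $\norm{S(E)} = 1$. I would begin from the stationary Agmon--Kuroda representation
\[
S(E;H^\text{ext},H_0) - I = -2\pi i\, \Gamma_0(E) \bigl(V^\text{ext} - V^\text{ext} R^\text{ext}(E+i0) V^\text{ext}\bigr) \Gamma_0(E)^*,
\]
where $\Gamma_0(E): L^2_s(\re^d) \to L^2(\mathbb{S}^{d-1})$, $s>1/2$, is the restriction of the $\hbar$-Fourier transform to the sphere of radius $\sqrt{E}/\hbar$, and $R^\text{ext}(E+i0)$ is the boundary value of the resolvent. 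Under Assumption~B, the semiclassical limiting absorption principle of Robert--Tamura applied to $H^\text{ext}$ yields
\[
\bignorm{\jap{x}^{-s} R^\text{ext}(E+i0) \jap{x}^{-s}} \leq C\hbar^{-1}, \quad s>\tfrac12,
\]
uniformly for $E$ in a neighbourhood of $E_0$. Combined with the fact that $\Gamma_0(E)\jap{x}^{-s}$ is uniformly bounded in $\hbar$ and in $E$, this gives $\norm{S(E;H^\text{ext},H_0)} \leq 1$ with the $\hbar$-dependence properly controlled.

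Next I would differentiate the stationary representation in $E$. The operator $\partial_E R^\text{ext}(E+i0) = -R^\text{ext}(E+i0)^2$ can be controlled in weighted spaces through the iterated limiting absorption principle under the non-trapping assumption, giving
\[
\bignorm{\jap{x}^{-s} R^\text{ext}(E+i0)^2 \jap{x}^{-s}} \leq C \hbar^{-2}, \quad s>\tfrac32.
\]
The $E$-derivative of $\Gamma_0(E)$ is analyzed by writing $\Gamma_0(E)$ as a rescaling of a fixed sphere-trace operator composed with the semiclassical Fourier transform: each $\partial_E$ costs at most one extra power of $\hbar^{-1}$. Collecting these contributions yields
\[
\bignorm{\partial_E S(E;H^\text{ext},H_0)} \leq C \hbar^{-N}
\]
for some fixed $N$ (which need only be finite; one can take $N=3$ or $4$ depending on the weights). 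Integrating in $E$ gives a Lipschitz estimate
\[
\bignorm{S(E_1;H^\text{ext},H_0) - S(E_2;H^\text{ext},H_0)} \leq C \hbar^{-N} |E_1 - E_2|.
\]

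Finally, interpolating this Lipschitz bound against the trivial bound $\norm{S(E_1) - S(E_2)} \leq 2$, one has, for any $\gamma \in (0,1]$,
\[
\bignorm{S(E_1) - S(E_2)} \leq 2^{1-\gamma} \bigl(C \hbar^{-N} |E_1 - E_2|\bigr)^\gamma = C' \hbar^{-N\gamma} |E_1 - E_2|^\gamma.
\]
Choosing $\gamma = 2/(N-1)$ makes $N\gamma = 2+\gamma$, which matches the stated bound \eqref{eq:Smatrix-Hoelder}.

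The main obstacle is the careful bookkeeping of the $\hbar$-powers in the Agmon--Kuroda representation: one must control both the squared resolvent $R^\text{ext}(E+i0)^2$ via the iterated non-trapping limiting absorption principle and the $E$-derivative of the sphere-restriction operator $\Gamma_0(E)$, which scales with $\hbar^{-1}$ due to the semiclassical rescaling of the energy surface. Once these polynomial bounds are in place, the Hölder estimate is a straightforward interpolation and the concrete value of $\gamma$ follows from the value of $N$.
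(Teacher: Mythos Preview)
Your approach---establish a Lipschitz bound and then interpolate---is different from the paper's and is sound when $\rho>2$, but it has a genuine gap for the full range $\rho>1$ covered by Assumption~A. The obstruction is in the differentiation step: to bound $\partial_E S(E)$ via the stationary representation you must control both $\jap{x}^{-s}R^\exter(E+i0)^2\jap{x}^{-s}$ (which requires $s>3/2$) and $(\partial_E\mathcal{F}_E)\jap{x}^{-s}$ (which also requires $s>3/2$, since $\partial_E\mathcal{F}_E\sim\h^{-1}\mathcal{F}_E\, x$). Distributing these weights across the two factors of $V^\exter$ forces $\rho>2$. For $1<\rho\le 2$ the map $E\mapsto S(E;H^\exter,H_0)$ need not be norm-differentiable, so the Lipschitz estimate is simply unavailable and interpolation cannot rescue it.

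The paper avoids this by never differentiating. It invokes the Mourre-theoretic H\"older estimate directly: for any $s>1/2$,
\[
\bignorm{\jap{x}^{-s}R^\exter(E_1+i0)\jap{x}^{-s}-\jap{x}^{-s}R^\exter(E_2+i0)\jap{x}^{-s}}\le C\h^{-1-\c}\abs{E_1-E_2}^\c,\qquad \c=\frac{s-1/2}{s+1/2},
\]
with the $\h$-dependence read off by scaling from the standard Mourre argument, together with an analogous H\"older bound for $\mathcal{F}_E\jap{x}^{-s}$. Since only weights $s>1/2$ are needed, the decay $\rho>1$ suffices. Plugging these, the $O(\h^{-1})$ non-trapping resolvent bound, and the $O(\h^{-1/2})$ bound for $\mathcal{F}_E\jap{x}^{-s}$ into the stationary formula yields $\h^{-2-\c}\abs{E_1-E_2}^\c$ directly, with no interpolation.

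A minor additional inaccuracy: $\mathcal{F}_E\jap{x}^{-s}$ is not uniformly bounded in $\h$; since $\mathcal{F}_E^*\mathcal{F}_E=\pi^{-1}\Im R_0(E+i0)$ and the latter is $O(\h^{-1})$ in weighted norm, one has $\norm{\mathcal{F}_E\jap{x}^{-s}}=O(\h^{-1/2})$. This does not break the structure of your argument (you only need a finite $N$), but it is part of where the specific exponent $2+\c$ comes from.
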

See Appendix~\ref{Smatrix-Hoelder-proof} for the proof. 
Note that the neighbourhoods of resonant energies are not excluded
in Proposition~\ref{Smatrix-Hoelder}.

\subsection{Main result}

In order to state our main result, 
first we need to recall the definition of spectral flow for 
unitary operators. 
Let $\{U(t)\}_{t\in[0,1]}$ be a norm continuous family of unitary operators 
in a Hilbert space $\mathcal{H}$ such that $U(t)-I$ is compact for all $t\in [0,1]$. 
We would like to define the spectral flow of $\{U(t)\}_{t\in[0,1]}$ through a point $e^{i\th}$, 
$\th\in (0,2\pi)$. The {\em na\"ive}\/ definition of spectral flow is 
\begin{equation}\label{eq:naive-sf-definition}
\begin{split}
&\mbox{sf}(e^{i\th};\{U(t)\}_{t\in[0,1]}) \\
&= \jap{\text{the number of eigenvalues of $U(t)$ which cross $e^{i\th}$} \\
&\qquad \text{in the anti-clockwise direction}}\\
&\quad - \jap{\text{the number of eigenvalues of $U(t)$ which cross $e^{i\th}$} \\
&\quad\qquad  \text{in the clockwise direction}}
\end{split}
\end{equation}
as $t$ grows from 0 to 1. The eigenvalues are counted with multiplicities taken into account. 
Of course, there may be infinitely many intersections, and so in general the r.h.s.\ of 
\eqref{eq:naive-sf-definition} may be ill-defined. We postpone the 
discussion of the precise definition of the spectral flow until Section~\ref{sec2}.

Our main result is 
\begin{thm}\label{main-thm}
Under Assumptions A and B (see Section~\ref{sec1.2})
there exist positive constants $\alpha$, $\beta$, $\delta$ such that
the following statement holds true. 
Suppose that $\abs{E_\res-E_0}<\delta$, $E_\res$ is an eigenvalue of $H^\inter$ 
of multiplicity $m\geq1$ and 
$$
[E_\res-e^{-\alpha/\h},E_\res+e^{-\alpha/\h}]\cap \sigma(H^\inter)=\{E_\res \}.
$$
Then for all $\theta\in(0,2\pi)$ such that
\begin{equation}
\dist\bigl(e^{i\theta},\sigma(S(E_\res;H^\exter,H_0))\bigr)>e^{-\beta/\h}
\label{eq:theta}
\end{equation}
and for all sufficiently small $\h>0$ one has
\begin{equation}
\sflow (e^{i\theta};\{S(E;H,H_0)\}_{E\in[E_-,E_+]})=m,
\label{a1}
\end{equation}
where 
$E_\pm=E_\res\pm e^{-\alpha/\h}$.
\end{thm}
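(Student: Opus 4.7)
The plan is to exploit the chain rule $S(E;H,H_0)=S(E;H^\exter,H_0)\cdot S(E;H,H^\exter)$ from~\eqref{eq:Smatrix-chain-rule} to split the scattering matrix into a slowly varying background $A(E):=S(E;H^\exter,H_0)$ and a resonant factor $R(E):=S(E;H,H^\exter)$. By Proposition~\ref{Smatrix-Hoelder}, $\|A(E)-A(E_\res)\|\leq C\h^{-2-\c}e^{-\a\c/\h}$ for $E\in[E_-,E_+]$, so by choosing $\a$ sufficiently large relative to $\b$ in~\eqref{eq:theta}, $A(E)$ is effectively constant on our interval at the scale $e^{-\b/\h}$. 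At the endpoints, Proposition~\ref{S-matrix-error-prop} yields $\|S(E_\pm;H,H_0)-A(E_\pm)\|\leq Ce^{-\a/\h}$, hence $\|R(E_\pm)-I\|$ is exponentially small. Homotopy invariance of spectral flow (developed in Section~\ref{sec2}) then reduces the problem to computing $\sflow(e^{i\th};\{U_0 R(E)\}_{E\in[E_-,E_+]})$ with $U_0:=A(E_\res)$; the homotopy that freezes the background does not move any eigenvalue across $e^{i\th}$ thanks to~\eqref{eq:theta}.

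The heart of the proof is the analysis of $R(E)=S(E;H,H^\exter)$ in a window of width $e^{-\a/\h}$ around $E_\res$. Since $V-V^\exter$ is supported in the classically forbidden region inside $\O_1$, and the $m$-dimensional spectral subspace $\mathcal{R}$ of $H^\inter$ at $E_\res$ is Agmon-localized in $\O_2$, I would perform a Grushin (Feshbach--Schur) reduction on the resolvent identity relating $(H-E)^{-1}$ and $(H^\exter-E)^{-1}$, using $\mathcal{R}$ as the resonant subspace. Combined with the stationary formula for the scattering matrix, this should produce, up to an exponentially small error in operator norm, a Breit--Wigner representation
\[
R(E) = I + \Pi\bigl(M(E)-I\bigr)\Pi^*,
\]
where $\Pi\colon\co^m\to L^2(\mathbb{S}^{d-1})$ is a rank-$m$ partial isometry built from boundary values of the resolvent of $H^\exter$ applied to the resonant eigenfunctions of $H^\inter$, and $M(E)$ is a unitary $m\times m$ matrix whose eigenvalues have the form $(E-E^{(j)}_\res-i\Gamma_j/2)(E-E^{(j)}_\res+i\Gamma_j/2)^{-1}$ with exponentially small widths $\Gamma_j=O(e^{-2\n/\h})$ governed by the Agmon distance from Proposition~\ref{prop.res-error}. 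Provided $\a<\n$, each such eigenvalue traces one full clockwise revolution of the unit circle as $E$ sweeps $[E_-,E_+]$, while $R(E_\pm)$ is exponentially close to $I$.

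Finally, $U_0 R(E)=U_0+U_0\Pi(M(E)-I)\Pi^*$ differs from the fixed unitary $U_0$ by a rank-$m$ perturbation, and starts and ends near $U_0$. In the block decomposition associated to $\Ran\Pi\oplus(\Ran\Pi)^\perp$, a finite-dimensional Schur-complement calculation — together with the fact that $m$ eigenvalues of $M(E)$ each traverse the unit circle exactly once — yields that the net number of anti-clockwise crossings of $e^{i\th}$ by eigenvalues of $U_0 R(E)$ is exactly $m$, for every $e^{i\th}$ separated from $\s(U_0)$ by $e^{-\b/\h}$. This is~\eqref{a1}. The main technical obstacle I anticipate is making the Grushin reduction fully rigorous: because the framework does not assume an analytic continuation of the resolvent, one must use the real eigenfunctions of $H^\inter$ as approximate resonant states and carefully balance the error terms against the exponentially small width $\Gamma_j$, arranging a scale separation $e^{-\a/\h}\gg\Gamma_j\gg\mathrm{error}$ by choosing the constants $\a,\b,\n$ appropriately.
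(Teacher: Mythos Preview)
Your opening moves --- the chain-rule factorisation and the use of Proposition~\ref{Smatrix-Hoelder} to freeze the background $S(E;H^\exter,H_0)$ on $[E_-,E_+]$ --- are in the same spirit as the paper, but from that point on the two arguments diverge sharply.

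The paper never analyses the resonant factor on the window $[E_-,E_+]$. It works instead with the counting function $\mu(e^{i\theta},E)$ of Section~\ref{sec1.5}, defined as spectral flow over $[E,\infty)$, so that the quantity in~\eqref{a1} becomes the difference $\mu(e^{i\theta},E_+;H,H_0)-\mu(e^{i\theta},E_-;H,H_0)$. An abstract perturbation theorem for the spectral flow of a product of unitary families (Theorem~\ref{sf-ptb-thm}) then yields, for each non-resonant $E$, the identity
\[
\mu(e^{i\theta},E;H,H_0)=\mu(e^{i\theta},E;H^\exter,H_0)+\mu(-1,E;H,H^\exter)
\]
up to a shift of $\theta$ by $O(e^{-\alpha/\h})$. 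The last term is computed by a Birman--Schwinger-type identity (Proposition~\ref{BS-count-prop}): one has $\mu(-1,E;H,H^\exter)=N\bigl((1,\infty);A^\exter(E)\bigr)$, and the tunnelling estimate (Lemma~\ref{tunnel-est-lem-1}) shows that $A^\exter(E)$ is exponentially close to $\sqrt{V_0}(H^\inter+V_0-E)^{-1}\sqrt{V_0}$, whose eigenvalues above $1$ are counted precisely by $N((-\infty,E);H^\inter)$. Everything is evaluated at the \emph{non-resonant} endpoints $E_\pm$; no information about $R(E)$ for $E$ near $E_\res$ is ever required. What your approach would buy, if carried out, is a detailed picture of the eigenvalue trajectories through the resonance; the paper's approach trades this for a much shorter argument that only sees the endpoints.

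Your Grushin/Breit--Wigner route is the expected physical picture, but beyond the rigour issue you already flag there is a second gap. For each eigenvalue of $M(E)$ to complete a full revolution on $[E_-,E_+]$ you need not only the upper bound $\Gamma_j=O(e^{-2\nu/\h})\ll e^{-\alpha/\h}$ (which does follow from Agmon estimates) but also that the width matrix is \emph{strictly positive}. That is a Fermi-golden-rule statement --- the resonant subspace of $H^\inter$ must be genuinely coupled to the continuum of $H^\exter$ through $V_0$ --- and nothing in Assumptions~A and~B supplies it; if it failed along some direction in $\mathcal R$ the corresponding eigenvalue of $M(E)$ would sit at $1$ and your count would fall short of $m$. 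The paper's endpoint-difference argument is completely insensitive to this issue: it never asks whether individual phases traverse the circle, only how many eigenvalues of $A^\exter(E)$ lie above $1$ at $E=E_\pm$.
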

It would be interesting to obtain more detailed information about the
eigenvalues of the scattering matrix near resonant energies. 

Theorem~\ref{main-thm} will be derived in Sections~\ref{sec2}--\ref{sec4} from a more precise
result, Theorem~\ref{main-theorem}, 
which is stated in terms of the spectral flow of the scattering matrix 
when the energy $E$ varies from some finite value to infinity.

We need to explain that Theorem~\ref{main-thm}  is not vacuous, 
i.e. that the set of angles $\theta$ satisfying \eqref{eq:theta} is 
non-empty:
\begin{prop}\label{S-mat-ev-bound}
There exist positive constants $\d$, $\eta$ such that for any $\th_1,\th_2\in (0,2\pi)$ and 
any $E$ satisfying $|E-E_0|<\d$, one has
\[
\sum_{\theta_1<\theta<\theta_2}
\dim\Ker(S(E;H^\exter,H_0)-e^{i\theta})
= O(\h^{-\eta}), \quad \h\to 0.
\]
\end{prop}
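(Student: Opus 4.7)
The plan is to bound the eigenvalue count in two steps: first, reduce to a Schatten-norm estimate on $K(E):=S(E;H^\exter,H_0)-I$; then estimate that norm using the non-trapping scattering theory for $H^\exter$.

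Set $c:=\min(|e^{i\theta_1}-1|,|e^{i\theta_2}-1|)$, which is strictly positive since $\theta_1,\theta_2\in(0,2\pi)$. Any eigenvector $\varphi$ of $S(E;H^\exter,H_0)$ with eigenvalue $e^{i\theta}$, $\theta\in(\theta_1,\theta_2)$, satisfies $\|K(E)\varphi\|=|e^{i\theta}-1|\,\|\varphi\|\geq c\|\varphi\|$, so eigenvectors of this type lie in the spectral subspace of $|K(E)|$ associated with $[c,\infty)$. Hence the sum in the proposition is bounded above by the number of singular values $s_n(K(E))$ with $s_n\geq c$, and by Chebyshev
\[
\#\{n:s_n(K(E))\geq c\}\leq c^{-p}\|K(E)\|_{\mathfrak{S}_p}^p
\]
for any $p\geq 1$ for which the Schatten $p$-norm on the right is finite. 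The remaining task is to produce a polynomial bound of the form $\|K(E)\|_{\mathfrak{S}_p}^p\leq C\h^{-\eta}$, uniformly for $|E-E_0|<\delta$.

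To estimate this Schatten norm I would use the standard stationary representation
\[
K(E)=-2\pi i\,\C_0(E)\bigl(V^\exter-V^\exter R^\exter(E+i0)V^\exter\bigr)\C_0(E)^*,
\]
where $\C_0(E)$ is the trace operator onto the energy shell for $H_0$ and $R^\exter$ is the resolvent of $H^\exter$. Since Assumption~B provides non-trapping at energies in $\Delta$, the Robert–Tamura limiting absorption principle in the semiclassical regime yields polynomial-in-$\h^{-1}$ bounds on the weighted resolvent $\jap{x}^{-s}R^\exter(E+i0)\jap{x}^{-s}$, uniformly on a neighbourhood of $E_0$. Combining this with Birman–Solomyak / Cwikel Schatten bounds on operators of the form $f(x)g(\h D)$ applied to the trace operator sandwiched with the $V^\exter$ factors gives a bound of the required form, with $\eta$ determined by $d$, $p$, and the decay rate $\rho$.

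The main obstacle is the bookkeeping: arranging weights and cutoffs so that all factors land in a common Schatten class, and tracking the powers of $\h^{-1}$ that arise from the energy-shell trace operator and the weighted resolvent uniformly in $E$. However, Assumption~A gives $V\in C^\infty$ with all derivatives controlled by \eqref{eq:Short-Range-1}, so one has as much smoothness and decay as needed; and since only some polynomial bound is required (rather than an optimal exponent $\eta$), a sufficiently generous choice of weight $s$ and of the Schatten index $p$ makes the argument go through softly.
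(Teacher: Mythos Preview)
Your approach is correct and essentially the same as the paper's: reduce the eigenvalue count to a Schatten-norm bound on $S(E;H^\exter,H_0)-I$ via a Chebyshev inequality (the paper works with $\Re S-I$ instead, a cosmetic difference), then control that norm through the stationary representation, the Robert--Tamura non-trapping resolvent estimate, and a Schatten bound on the weighted restriction operator $\mathcal{F}_E\jap{x}^{-s}$. One small correction: this last operator is not of the form $f(x)g(\h D)$, so Birman--Solomyak/Cwikel does not apply directly; the paper gets $\|\mathcal{F}_E\jap{x}^{-s}\|_{S_q}\leq C\h^{-r}$ by interpolating between the operator-norm bound $\|\mathcal{F}_E\jap{x}^{-s_0}\|\leq C\h^{-1/2}$ ($s_0>1/2$) and the Hilbert--Schmidt bound $\|\mathcal{F}_E\jap{x}^{-s_1}\|_{S_2}\leq C\h^{-d/2}$ ($s_1>d/2$).
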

The proof is given in Appendix~\ref{S-mat-ev-bound-proof}.
Proposition~\ref{S-mat-ev-bound} shows that the Lebesgue measure of 
the $O(e^{-\a/\h})$-neighborhood of the spectrum 
of $S(E;H^\exter,H_0)$ on any arc not containing 1 is exponentially small. 
Thus \eqref{eq:theta} holds true for a {\em large}\/ 
set of $\th$.

\subsection{Method of proof}

We will use the chain rule for scattering matrices to write 
\begin{equation}\label{eq:Smatrix-chain-rule}
S(E;H,H_0)=\widetilde S(E;H,H^\exter) S(E;H^\exter,H_0), \quad E>0,
\end{equation}
where $\widetilde S(E;H,H^\exter)$ is 
unitarily equivalent to $S(E;H,H^\exter)$; 
see \eqref{stilde1}, \eqref{stilde2}.
In Section~\ref{sec3} we develop a version of perturbation theory for the 
spectral flow of products of unitaries. This allows us to estimate
the spectral flow of $S(E;H,H_0)$ in terms of the spectral flows of 
$S(E;H,H^\exter)$ and $S(E;H^\exter,H_0)$. 
Next, using the stationary representation for the scattering matrix
(see Section~\ref{sec3.2}) and tunnelling estimates (see Section~\ref{sec4}), 
we show that if $\theta$ satisfies \eqref{eq:theta}, then 
the spectral flow of $S(E;H^\exter,H_0)$ through $e^{i\theta}$ 
is zero and the spectral flow of $S(E;H,H^\exter)$ is $m$.


\section{The eigenvalue counting function for the scattering matrix}\label{sec2}

\subsection{Scattering theory}\label{sec4.1}

Here we recall a small amount of general scattering theory, as necessary to define the basic objects of our 
construction. 
For the details, see e.g. \cite{Yafaev}.
For a self-adjoint operator $A$ we denote by $P_{ac}(A)$ the projection onto the absolutely 
continuous subspace of $A$. 
The wave operators are defined, as usual, by 
\[
W_\pm(B,A) =\slim_{t\to\pm\infty} e^{itB} e^{-itA}P_{ac}(A),
\]
provided that the strong limits exist. The scattering operator is defined by 
\begin{equation}\label{eq:S-Op-Def}
\mathbf S(B,A)= W_+(B,A)^* W_-(B,A).
\end{equation}
If the wave operators are complete, i.e., 
\[
\Ran W_+(B,A)=\Ran W_-(B,A)=\Ran P_{ac}(B),
\]
then the scattering operator $\mathbf S(B,A)$ is unitary in $\Ran P_{ac}(A)$. 
We note the chain rule 
\begin{equation}\label{eq:S-op-chain-rule}
\mathbf S(C,A)=\widetilde{\mathbf S}(C,B)\mathbf S(B,A),
\end{equation}
where 
\begin{equation}\label{eq:def-tilde-S-op}
\widetilde{\mathbf S}(C,B)=W_+(B,A)^* \mathbf S(C,B) W_+(B,A),
\end{equation}
provided that the wave operators $W_\pm(B,A)$ and $W_\pm(C,B)$ exist and are complete. 

By the intertwining property of wave operators, viz.
\[
W_\pm(B,A)A=BW_\pm(B,A),
\]
the scattering operator $\mathbf S(B,A)$ commutes with $A$, 
and therefore $\mathbf S(B,A)$ and $A$ can be simultaneously diagonalized. 
The fibre operators of 
$\mathbf S(B,A)$ in this diagonalisation give the scattering matrix $S(E;B,A)$. 

Now let us recall the details of this construction for the operators
$A=H_0=-\h^2\Delta$, $B=H^\exter$, $C=H$ as defined in Section~\ref{sec1.2a}. 
We first note that the wave operators $W_\pm(H,H_0)$, $W_\pm(H^\exter,H_0)$ and $W_\pm(H,H^\exter)$ 
exist and are complete. 
For $f\in L^2(\re^d)$ such that $\jap{x}^s f(x)\in L^2(\re^d)$, $s>1/2$, we set 
\begin{equation}\label{eq:Free-Sp-Rep}
(\mathcal{F}_E f)(\o) =\frac{1}{\sqrt{2}\h^{d/2}} E^{(d-2)/4}\hat f(\sqrt{E}\o\h^{-1}), 
\quad E>0,\quad \o\in\mathbb{S}^{d-1},
\end{equation}
where $\hat f$ is the (unitary) Fourier transform of $f$. Then the map 
\[
\mathcal{F}\ :\ L^2(\re^d)\to L^2((0,\infty);L^2(\mathbb{S}^{d-1})), \quad 
(\mathcal{F}f)(E,\o)= (\mathcal{F}_E f)(\o),
\]
is a unitary operator which diagonalises $H_0$: 
\[
(\mathcal{F}_E H_0 f)(\o)=E (\mathcal{F}_E f)(\o).
\]
It follows that $\mathcal{F} S(H,H_0)\mathcal{F}^*$ can be represented as a direct integral of fibre operators: 
\[
\mathcal{F}_E \mathbf S(H,H_0)f = S(E;H,H_0)(\mathcal{F}_E f), \quad E>0. 
\]
Here $S(E;H,H_0)$ : $L^2(\mathbb{S}^{d-1}) \to L^2(\mathbb{S}^{d-1})$ is the scattering matrix. 

In the same way, one defines the scattering matrix $S(E;H^\exter,H_0)$. Finally, as in \eqref{eq:S-op-chain-rule}, 
\eqref{eq:def-tilde-S-op}, we denote
\begin{equation}
\widetilde{\mathbf S}(H,H^\exter)= W_+(H^\exter,H_0)^* \mathbf S(H,H^\exter) W_+(H^\exter,H_0).
\label{stilde1}
\end{equation}
Then $\widetilde{\mathbf  S}(H,H^\exter)$ commutes with $H_0$ and therefore 
$\widetilde{\mathbf S}(H,H^\exter)$ can also be represented as 
\begin{equation}
\mathcal{F}_E \widetilde{\mathbf S}(H,H^\exter) f 
=
\widetilde S(E;H,H^\exter)(\mathcal{F}_E f), \quad E>0,
\label{stilde2}
\end{equation}
with some fibre operators $\widetilde S(E;H,H^\exter)$ : $L^2(\mathbb{S}^{d-1})\to L^2(\mathbb{S}^{d-1})$. 
Then, as a consequence of \eqref{eq:S-op-chain-rule}, \eqref{eq:def-tilde-S-op}, the chain rule 
\eqref{eq:Smatrix-chain-rule} holds. 

\subsection{The stationary representation for the scattering matrix}
\label{sec3.2}

Essential for our construction is the stationary representation for the scattering 
matrix $S(E;H,H_0)$, see e.g. \cite[Section~0.7]{Yafaev}.
Let $\mathcal{F}_E$ be as in \eqref{eq:Free-Sp-Rep}, and let 
$R_0(z)=(H_0-zI)^{-1}$, $R(z)=(H-zI)^{-1}$. The stationary representation reads 
\begin{equation}\label{eq:S-mat-rep-1}
S(E;H,H_0)=I-2\pi i \mathcal{F}_E (V-VR(E+i0)V)\mathcal{F}_E^*.
\end{equation}
This representation allows one to describe the spectrum of the scattering matrix in terms of the 
spectrum of the boundary values of the resolvent $R(E+i0)$. 
Let us denote 
\begin{align*}
&J=\sign(V),  \qquad T_0(z)= \sqrt{\abs{V}} R_0(z)\sqrt{\abs{V}},
\\
& A_0(E)=\Re T_0(E+i0), \quad B_0(E)=\Im T_0(E+i0).
\end{align*}
Using this notation, the resolvent identity and the identity $J^{-1}=J$, 
one can write \eqref{eq:S-mat-rep-1} as
\begin{equation}\label{eq:S-mat-rep-1a}
S(E;H,H_0)
=I-2\pi i \mathcal{F}_E \sqrt{\abs{V}}\bigl(J-A_0(E)-iB_0(E)\bigr)^{-1}\sqrt{\abs{V}}\mathcal{F}_E^*;
\end{equation}
see e.g. \cite[Section 7.7]{Yafaev}.
This formula has a general operator theoretic nature and is not specific to the pair $H_0,H$.
In fact, in what follows 
we will apply the representation \eqref{eq:S-mat-rep-1a} 
to the pair $H,H^\exter$.

\subsection{Definition of spectral flow}
First we fix the notation for an eigenvalue counting function of a unitary operator. 
Let $U$ be a unitary operator such that the difference $U-I$ is compact. 
For $\th_1,\th_2\in (0,2\pi)$, we denote 
\begin{equation}\label{eq:EV-Count-1}
N(e^{i\th_1},e^{i\th_2};U)=\sum_{\th_1\leq \th<\th_2}\mbox{dim Ker}(U-e^{i\th}I), 
\quad \text{if }\th_1\leq \th_2, 
\end{equation}
and 
\begin{equation}\label{eq:EV-Count-2}
N(e^{i\th_1},e^{i\th_2};U)=-N(e^{i\th_2},e^{i\th_1};U),  
\quad \text{if }\th_1\geq \th_2. 
\end{equation}

Let $\{U(t)\}_{t\in[0,1]}$ be a norm continuous family of unitary operators 
in a Hilbert space $\mathcal{H}$ such that $U(t)-I$ is compact for all $t\in [0,1]$. 
The \emph{na\"ive} definition of  the spectral flow of $\{U(t)\}_{t\in[0,1]}$ through a point $e^{i\th}$, 
$\th\in (0,2\pi)$, is given by \eqref{eq:naive-sf-definition}. 
Let us discuss a rigorous definition of spectral flow. 
First we assume that there exists $\th_0\in (0,2\pi)$ such that 
\begin{equation}\label{eq:good-theta}
e^{i\th_0}\notin \s(U(t)), \quad \text{for all }t\in [0,1].
\end{equation}
Then we set (using the notation \eqref{eq:EV-Count-1}, \eqref{eq:EV-Count-2})
\begin{equation}\label{eq:sf-def-1}
\mbox{sf}(e^{i\th};\{U(t)\}_{t\in[0,1]}) = N(e^{i\th},e^{i\th_0}; U(1))
-N(e^{i\th},e^{i\th_0};U(0)). 
\end{equation}
It is evident that this definition is independent of the choice of $\th_0$ and agrees with the 
{\em na\"ive}\/ definition \eqref{eq:naive-sf-definition} whenever the latter makes sense. 

In general, $\th_0$ as above may not exist. However, using the norm continuity of $U(t)$, 
one can always find values 
\[
0=t_0<t_1<t_2<\cdots <t_n =1
\]
such that for each of the intervals $\Delta_j =[t_{j-1},t_j]$, a point $\th_j\in (0,2\pi)$ satisfying 
\eqref{eq:good-theta} for all $t\in \Delta_j$ can be found. Thus, the spectral flows 
$\mbox{sf}(\cdot;\{U(t)\}_{t\in \Delta_j})$, $j=1,\dots,n$, are well defined. Now we set 
\begin{equation}\label{eq:sf-def-2}
\mbox{sf}(e^{i\th};\{U(t)\}_{t\in[0,1]}) = \sum_{j=1}^n \mbox{sf}(e^{i\th};\{U(t)\}_{t\in \Delta_j}).
\end{equation}
It is not difficult to see that this definition is independent of the choice of the intervals $\Delta_j$
and the corresponding points $\th_0$, and agrees with the \emph{na\"ive} definition 
\eqref{eq:naive-sf-definition} whenever the latter makes sense. 

In the context of self-adjoint operators with discrete spectrum, the notion of spectral flow goes back 
at least to the seminal work \cite{APS}; see also \cite{RoSa} for a comprehensive survey. 
One can find many equivalent approaches to the definition of spectral flow in the literature. 

The property of the spectral flow that is crucial for us in the sequel is its invariance with respect 
to the homotopies of the family $\{U(t)\}_{t\in[0,1]}$. The homotopy must be of a class preserving the 
compactness of $U(t)-I$. The homotopy invariance of spectral flow is proven by using standard 
topological arguments.

\subsection{The eigenvalue counting function for $S(E)$}\label{sec1.5}

For $E>0$, let us consider the eigenvalue counting function of the scattering matrix 
$S(E)=S(E;H,H_0)$. 
For a fixed $\th_0\in (0,2\pi)$, the function 
\begin{equation}\label{eq:EV-Count-3}
(0,2\pi)\ni \th \mapsto N(e^{i\th},e^{i\th_0};S(E))
\end{equation}
is a non-increasing integer-valued function with jumps at the points $e^{i\th}\in\s(S(E))$. 
Of course, different choices of $\th_0$ lead to different integer additive constants in the 
definition of the counting function \eqref{eq:EV-Count-3}. 
Below we explain how to fix this constant 
in a certain standard way; the resulting counting function will be denoted by 
$\m(e^{i\th},E)$.

Recall the well known relation 
\begin{equation}\label{eq:HighEnergyS-matrix}
\norm{S(E)-I}\to 0 \quad \text{as }E\to+\infty.
\end{equation}
Now fix $E>0$
and define the family
\begin{equation}
U(t)= S(E'), \quad E'= E+\frac{1-t}{t}, \quad t\in (0,1], 
\quad \text{ and } U(0)=I.
\label{familyU}
\end{equation}
Then $\{U(t)\}_{t\in[0,1]}$  is a norm continuous unitary family with $U(t)-I$ compact for all $t\in [0,1]$. 
We set
\begin{equation}\label{eq:mu-and-sf}
\m(e^{i\th},E;H,H_0)
= 
\sflow(e^{i\th},\{U(t)\}_{t\in[0,1]}), 
\quad \text{ where $U$ is given by \eqref{familyU}.}
\end{equation}
Then $\m(\cdot,E;H,H_0)$ coincides with the eigenvalue counting function 
\eqref{eq:EV-Count-3} up to a particular choice of the additive integer normalisation
constant.

The above definition of $\mu$ can be alternatively described as follows. 
The function $\m(e^{i\th},E)$ is the unique function
which satisfies the following properties: 
\begin{enumerate}
\renewcommand{\theenumi}{\roman{enumi}}
\renewcommand{\labelenumi}{(\theenumi) }
\item 
for any $\th_0\in (0,2\pi)$ and any $E>0$ there is an integer $m(\th_0,E)$ such that 
\[
\m(e^{i\th},E)= N(e^{i\th},e^{i\th_0};S(E)) +m(\th_0,E), 
\quad \forall \theta\in(0,2\pi);
\]
\item 
the function
\begin{equation}\label{eq:EV-Count-5}
(0,2\pi)\ni \th \mapsto \m(e^{i\th},E)
\end{equation}
considered as an element of $L^1_{\mbox{\tiny loc}}(0,2\pi)$ (the choice of 
the function space is not important here) depends continuously on $E>0$;

\item 
the function \eqref{eq:EV-Count-5}, considered as an element of 
$L^1_{\mbox{\tiny loc}}(0,2\pi)$, converges to zero as $E\to+\infty$. 
\end{enumerate}

The function $\m$ has been systematically studied in \cite{Push1, Push2} in an abstract 
operator theoretic framework. It possesses a number of natural properties; for example, 
$\pm \m(e^{i\th}, E)\geq 0$ if $\mp V\geq 0$.

\subsection{The counting function $\mu$ near resonant values}\label{sec1.6}

Let the Hamiltonians $H_0, H, H^\inter,H^\exter$, and the energy $E_0$ be as stipulated in Sections~\ref{sec1.2},
\ref{sec1.2a}.
Our results below involve the eigenvalue counting functions $\m(e^{i\th}, E;H,H_0)$, 
$\m(e^{i\th}, E;H^\exter,H_0)$ of the scattering matrices $S(E;H,H_0)$, $S(E;H^\exter,H_0)$; 
see definition \eqref{eq:mu-and-sf} above.

\begin{thm}\label{main-theorem}
There exist positive constants $\a$,  $\d$ such that for all $E$ satisfying $|E-E_0|<\d$ and 
$\dist(E;\s(H^\inter))>\frac12 e^{-\alpha/\h}$, and for all $\th\in (e^{-\a/\h},2\pi-e^{-\a/\h})$, one has 
\begin{multline}\label{eq:main-result-ineq}
\m(e^{i\th_+},E;H^\exter,H_0) +N((-\infty,E);H^\inter) 
\leq \m(e^{i\th},E;H,H_0)  
\\
\leq \m(e^{i\th_-},E;H^\exter,H_0) + N((-\infty,E); H^\inter)
\end{multline}
if $\h>0$ is sufficiently small, where $\th_\pm =\th\pm e^{-\a/\h}$. 
\end{thm}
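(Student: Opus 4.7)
The plan is to reduce the inequality to a statement about each factor in the chain rule \eqref{eq:Smatrix-chain-rule}, which writes $S(E;H,H_0)=\widetilde S(E;H,H^\exter)\cdot S(E;H^\exter,H_0)$. I would first establish an abstract composition estimate for $\m$: if $U=U_1U_2$ with $U_j-I$ compact, and if $\s(U_1)$ consists of $k$ eigenvalues bounded away from $1$ together with a cluster of eigenvalues within distance $e^{-\a/\h}$ of $1$, then
\[
\m(e^{i\th_+},U_2)+k\;\le\;\m(e^{i\th},U_1U_2)\;\le\;\m(e^{i\th_-},U_2)+k,\qquad \th_\pm=\th\pm e^{-\a/\h}.
\]
This should follow from the homotopy invariance of spectral flow---deforming $U_1$ through its finite-rank non-trivial component back to $I$---combined with Proposition~\ref{Smatrix-Hoelder}, which controls how $\m(\cdot,U_2)$ varies under angular shifts of size $e^{-\a/\h}$ introduced by the deformation, all within the abstract framework for $\m$ developed in \cite{Push1,Push2}.

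The next step is to apply this inequality with $U_1=\widetilde S(E;H,H^\exter)$, $U_2=S(E;H^\exter,H_0)$, and to show that $U_1$ has exactly $k=N((-\infty,E);H^\inter)$ eigenvalues bounded away from $1$, the rest of $\s(U_1)$ being clustered within $O(e^{-\a/\h})$ of $1$. For this I would invoke the stationary representation \eqref{eq:S-mat-rep-1a} for the pair $(H,H^\exter)$, whose perturbation $\wV=V-V^\exter$ is supported in $\O_1$. Setting $J=\sign\wV$, $T(z)=\sqrt{\abs{\wV}}(H^\exter-z)^{-1}\sqrt{\abs{\wV}}$, $A(E)=\Re T(E+i0)$, $B(E)=\Im T(E+i0)$, the non-trivial part of $\s(\widetilde S(E;H,H^\exter))\setminus\{1\}$ is encoded in the spectrum of $J-A(E)-iB(E)$.

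The main technical obstacle is a tunnelling estimate of the form
\[
\Bignorm{\sqrt{\abs{\wV}}\bigl((H^\exter-z)^{-1}-(H^\inter-z)^{-1}\bigr)\sqrt{\abs{\wV}}}\le Ce^{-\a/\h},\qquad z=E+i0,
\]
valid when $\dist(E,\s(H^\inter))>\tfrac12 e^{-\a/\h}$. The two Hamiltonians differ only on $\O_2\setminus\O_1$ (through $V^\exter-V^\inter$), whereas $\supp\wV\subset\O_1$; these two sets are separated by a positive Agmon distance at energies near $E_+$, and a geometric resolvent identity combined with weighted Agmon-type resolvent estimates for $H^\inter$ should yield the exponential bound. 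Granted this, $B(E)$ is $O(e^{-\a/\h})$ in norm and $A(E)$ is exponentially close to the self-adjoint compact operator $K(E)=\sqrt{\abs{\wV}}(H^\inter-E)^{-1}\sqrt{\abs{\wV}}$.

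A Birman--Schwinger argument then closes the proof. Since $K(E)$ has a pole at each eigenvalue $\l$ of $H^\inter$, the real eigenvalues of $J-K(E)$ sweep across $0$ exactly as $E$ crosses such $\l$, so that the number of eigenvalues of $J-A(E)$ of a chosen sign, modulo an $E$-independent constant, equals $N((-\infty,E);H^\inter)$. Translating back via \eqref{eq:S-mat-rep-1a} and using the exponential smallness of $B(E)$, one concludes that $\widetilde S(E;H,H^\exter)$ has exactly $N((-\infty,E);H^\inter)$ eigenvalues displaced from $1$, while the remainder of its spectrum lies within $e^{-\a/\h}$ of $1$. This identifies $k=N((-\infty,E);H^\inter)$ in the composition inequality of the first step and yields \eqref{eq:main-result-ineq}.
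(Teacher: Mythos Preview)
Your overall plan---factor via \eqref{eq:Smatrix-chain-rule}, control $\widetilde S(E;H,H^\exter)$ by tunnelling, and feed this into an abstract composition estimate for $\mu$---is the paper's strategy, but your picture of $\sigma\bigl(\widetilde S(E;H,H^\exter)\bigr)$ is wrong and this breaks both the abstract step and the Birman--Schwinger step. You claim that at fixed $E$ (with $\dist(E,\sigma(H^\inter))>\tfrac12 e^{-\alpha/\h}$) this operator has $k=N((-\infty,E);H^\inter)$ eigenvalues bounded away from $1$. In fact \emph{all} of its eigenvalues lie within $Ce^{-\alpha/\h}$ of $1$ (Lemma~\ref{non-reso-S-mat-lem}); this is precisely what makes Proposition~\ref{S-matrix-error-prop} true. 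The integer $N((-\infty,E);H^\inter)$ is not an eigenvalue count at fixed $E$ but the spectral flow $\mu(-1,E;H,H^\exter)$ of the \emph{family} $E'\mapsto S(E';H,H^\exter)$ through $-1$ as $E'$ runs from $E$ to $\infty$ (Lemma~\ref{counting-eq-lem}, via Proposition~\ref{BS-count-prop} and Birman--Schwinger). Accordingly the abstract lemma must be stated for families $\{\widetilde U(t)\},\{U(t)\}$ with $\widetilde U(0)=U(0)=I$ and $\sigma(\widetilde U(1))$ concentrated near $1$, the integer $m$ being $\sflow(-1;\{\widetilde U(t)\})$---this is Theorem~\ref{sf-ptb-thm}---not a count of displaced eigenvalues of a single $U_1$. (Proposition~\ref{Smatrix-Hoelder} plays no role in this argument.)

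There is also a support error in your tunnelling step. You assert that $H^\exter$ and $H^\inter$ differ only on $\Omega_2\setminus\Omega_1$, disjoint from $\supp\wV=\supp V_0\subset\Omega_1$. The opposite holds: $V^\exter=V^\inter=V$ on $\Omega_2\setminus\Omega_1$, while they differ on parts of $\Omega_1$ and of $\Omega_2^c$; hence $\supp(V^\exter-V^\inter)$ meets $\supp V_0$ and no Agmon separation is available, so your estimate fails as written. The paper's remedy is to compare $\sqrt{V_0}\,R^\exter(E+i0)\sqrt{V_0}$ not with $\sqrt{V_0}(H^\inter-E)^{-1}\sqrt{V_0}$ but with $\sqrt{V_0}(H^\inter+V_0-E)^{-1}\sqrt{V_0}$: the relevant potential difference is then $V-V^\inter$, supported in $\Omega_2^c$ and genuinely separated from $\supp V_0\subset\Omega_1$; moreover $H^\inter+V_0\ge E_+'>E$, so the comparison resolvent is uniformly bounded (Lemma~\ref{tunnel-est-lem-1}). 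The Birman--Schwinger identity becomes $N((-\infty,E);H^\inter)=N\bigl((1,\infty);\sqrt{V_0}(H^\inter+V_0-E)^{-1}\sqrt{V_0}\bigr)$, which matches $\mu(-1,E;H,H^\exter)=N\bigl((1,\infty);A^\exter(E)\bigr)$ once the two self-adjoint operators are shown exponentially close.
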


We note that the factor $\frac12$ in front of the exponential $e^{-\alpha/\h}$ in the statement 
of the theorem is of no importance; it is introduced purely for the convenience 
of the proof of Theorem~\ref{main-thm}.
The proof of Theorem~\ref{main-theorem} is given in Sections~\ref{sec3} and \ref{sec4}. 
\begin{cor}\label{cor}
There exist positive constants $\a$, $\d$ such that for all $E$ satisfying $|E-E_0|<\d$ and 
$\dist(E;\s(H^\inter))>\frac12 e^{-\alpha/\h}$, and for any $\th\in (0,2\pi)$ such that
\begin{equation}\label{eq:small-arc}
\dist(e^{i\theta},\sigma(S(E;H^\exter,H_0)))>e^{-\alpha/\h},
\end{equation}
one has 
\begin{equation}\label{1}
\m(e^{i\th},E;H, H_0) =\m(e^{i\th},E;H^\exter,H_0)+N((-\infty,E);H^\inter). 
\end{equation}
\end{cor}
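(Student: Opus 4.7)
The plan is to read Corollary~\ref{cor} as an immediate consequence of Theorem~\ref{main-theorem} together with the piecewise-constant structure, in $\th$, of the counting function $\m(e^{i\th},E;H^\exter,H_0)$. The key observation is that under the gap hypothesis~\eqref{eq:small-arc}, the lower and upper bounds in~\eqref{eq:main-result-ineq} both coincide with $\m(e^{i\th},E;H^\exter,H_0)+N((-\infty,E);H^\inter)$, so the sandwich collapses to the equality~\eqref{1}.

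To execute this, I would denote by $\a_0$ and $\d_0$ the positive constants supplied by Theorem~\ref{main-theorem}, and choose the constants for the corollary to satisfy $\a<\a_0$ and $\d\le\d_0$. With this choice $e^{-\a/\h}>e^{-\a_0/\h}$, so the corollary's hypothesis $\dist(E,\s(H^\inter))>\frac12 e^{-\a/\h}$ is stronger than the corresponding hypothesis of Theorem~\ref{main-theorem}; the theorem therefore applies and yields~\eqref{eq:main-result-ineq} with the explicit values $\th_\pm=\th\pm e^{-\a_0/\h}$.

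Next, I would invoke the description from Section~\ref{sec1.5} of $\f\mapsto\m(e^{i\f},E;H^\exter,H_0)$ as a non-increasing integer-valued step function on $(0,2\pi)$ whose jumps occur precisely at those $\f$ with $e^{i\f}\in\s(S(E;H^\exter,H_0))$. The elementary chord-length estimate $|e^{i\th}-e^{i\f}|\le|\th-\f|$ together with~\eqref{eq:small-arc} forces every such jump point to satisfy $|\th-\f|>e^{-\a/\h}$. Since $\a<\a_0$, the closed interval $[\th-e^{-\a_0/\h},\th+e^{-\a_0/\h}]$ then contains no jump point, and hence
\[
\m(e^{i\th_-},E;H^\exter,H_0)=\m(e^{i\th},E;H^\exter,H_0)=\m(e^{i\th_+},E;H^\exter,H_0).
\]
Substituting this identity into~\eqref{eq:main-result-ineq} collapses it to~\eqref{1}.

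There is no substantive obstacle once Theorem~\ref{main-theorem} is in hand; the argument is purely bookkeeping on the two constants $\a_0$ (from the theorem) and $\a$ (for the corollary), together with the elementary chord-length estimate on the unit circle used to convert the Euclidean gap in~\eqref{eq:small-arc} into a gap in the angular variable.
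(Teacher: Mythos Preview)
Your proposal is correct and follows essentially the same route as the paper: apply Theorem~\ref{main-theorem} and then observe that the gap hypothesis~\eqref{eq:small-arc} forces $\m(e^{i\th_-},E;H^\exter,H_0)=\m(e^{i\th},E;H^\exter,H_0)=\m(e^{i\th_+},E;H^\exter,H_0)$, collapsing the sandwich. The paper's proof is terser and simply takes the corollary's $\a$ equal to the theorem's; your introduction of a separate $\a<\a_0$ is harmless but unnecessary, since for $\f\in[\th-e^{-\a/\h},\th+e^{-\a/\h}]$ one already has $|e^{i\th}-e^{i\f}|=2|\sin((\th-\f)/2)|<|\th-\f|\le e^{-\a/\h}$ strictly, so the same $\a$ works.
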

\begin{proof}
Indeed, for all $\theta$ that satisfy \eqref{eq:small-arc}
we have
\[
\m(e^{i\th_+},E;H^\exter,H_0) 
=
\m(e^{i\th_-},E;H^\exter,H_0) 
=
\m(e^{i\th},E;H^\exter,H_0),
\]
and so \eqref{eq:main-result-ineq} yields \eqref{1}.
\end{proof}
Now using  Corollary~\ref{cor}, 
we can prove Theorem~\ref{main-thm}. 

\begin{proof}[Proof of Theorem~\ref{main-thm}]
Let $\alpha$, $\delta$ be as in Theorem~\ref{main-theorem}.
Using Proposition~\ref{Smatrix-Hoelder}, we get for all sufficiently 
small $\h$
\begin{equation}
\norm{S(E;H^\exter,H_0)-S(E_\res;H^\exter,H_0)}
\leq
C\h^{-2-\gamma}e^{-\gamma\alpha/\h},
\quad\text{ if } \abs{E-E_\res}<e^{-\alpha/\h}.
\label{b1}
\end{equation}
Choose $\beta>0$ such that $\beta<\alpha$ and $\beta<\gamma\alpha$, 
and let $\theta$ be any angle that satisfies \eqref{eq:theta}. Then, combining
\eqref{eq:theta} and \eqref{b1}, we obtain
\begin{equation}
\dist(e^{i\theta},\sigma(S(E;H^\exter,H_0)))>\frac12 e^{-\alpha/\h}
\label{b2}
\end{equation}
for all sufficiently small $\h$ and all $E\in[E_-,E_+]$. 
Now we can apply Corollary~\ref{cor}
to $E=E_\pm$. This yields
\begin{multline}
\mu(e^{i\theta},E_+;H,H_0)-\mu(e^{i\theta},E_-;H,H_0)
\\
=
N((-\infty,E_+);H^\inter)-N((-\infty,E_-);H^\inter)=m.
\label{b3}
\end{multline}
Here we have used the fact that by \eqref{b2}, 
none of the eigenvalues of $S(E;H^\exter,H_0)$ crosses
$e^{i\theta}$ as $E$ grows from $E_-$ to $E_+$, and therefore
$$
\mu(e^{i\theta},E_+;H^\exter,H_0)=\mu(e^{i\theta},E_-;H^\exter,H_0).
$$
Representing the family $\{S(E';H,H_0)\}_{E'\in[E_-,\infty)}$
as the concatenation of the families
$\{S(E';H,H_0)\}_{E'\in[E_-,E_+]}$ and $\{S(E';H,H_0)\}_{E'\in[E_+,\infty)}$
and recalling the definition \eqref{eq:mu-and-sf} of $\mu$, we obtain
\begin{equation}
\mu(e^{i\theta},E_+;H,H_0)-\mu(e^{i\theta},E_-;H,H_0)
=
\sflow (e^{i\theta};\{S(E;H,H_0)\}_{E\in[E_-,E_+]}).
\label{b4}
\end{equation}
From \eqref{b3} and \eqref{b4} we obtain
 \eqref{a1}.  
\end{proof}

\subsection{The strategy of proof of Theorem~\ref{main-theorem}}
The proof is based on the chain rule \eqref{eq:Smatrix-chain-rule}.
In Section~\ref{sec3} we develop a perturbation theory for spectral flow 
of products of unitaries. This allows us to estimate the function
$\mu(\cdot,E;H,H_0)$ in terms of $\mu(\cdot,E;H^\exter,H_0)$ 
and $\mu(\cdot,E;H,H^\exter)$, see Theorem~\ref{sf-ptb-thm}.
The next crucial step is to prove the equality
\begin{equation}
\mu(e^{i\theta},E;H,H^\exter)=N((-\infty,E);H^\inter)
\label{b5}
\end{equation}
for relevant values of $\theta$. 
For this, we rely 
on an explicit  formula for the function $\mu$ 
from \cite{Push1}.
This formula has an abstract operator-theoretic nature; 
we apply it to the pair of operators 
$(H,H^\exter)$. 
Denote $V_0= V^\exter-V$; by our assumptions, we have $V_0\geq0$.  
Similarly to the notation of Section~\ref{sec3.2}, let us set
\begin{align*}
& R^\exter(z)=(H^\exter-zI)^{-1}, \\
& A^\exter(E)=\Re \bigpare{\sqrt{V_0} R^\exter(E+i0)\sqrt{V_0}}, \\
& B^\exter(E)=\Im \bigpare{\sqrt{V_0} R^\exter(E+i0)\sqrt{V_0}}.
\end{align*}
\begin{prop}\label{BS-count-prop}\cite[Section~5]{Push1}
For any $\th\in (0,2\pi)$ and $E>0$, one has 
\begin{align}
&\dim\Ker (S(E;H,H^\exter)-e^{i\th}I)  \nonumber \\
&\qquad =\dim\Ker (A^\exter(E)+\cot (\th/2) B^\exter(E)-I), \label{eq:S-mat-kernel}\\
&\m(e^{i\th},E;H,H^\exter) =N((1,\infty);A^\exter(E)+\cot(\th/2)B^\exter(E)).\label{eq:BS-count-formula}
\end{align}
\end{prop}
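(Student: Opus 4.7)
The plan is to reduce both identities to the abstract operator-theoretic result established in \cite[Section~5]{Push1} and to verify that its hypotheses hold for our concrete pair $(H, H^\exter)$. The abstract theorem in \cite{Push1} applies to any pair of self-adjoint operators differing by $W = J\abs{W}^{1/2}\abs{W}^{1/2}$, $J = \sign W$, provided $\sqrt{\abs{W}}R(E+i0)\sqrt{\abs{W}}$ exists as a compact operator. I would apply this with $H$ viewed as the perturbation of $H^\exter$ by $W = -V_0$, where $V_0 = V^\exter - V \geq 0$ is compactly supported in $\overline{\O_1}$ by the construction of Section~\ref{sec1.2a}. Then $\abs{W} = V_0$, $J = -I$ on $\supp V_0$, and the operators $A^\exter(E), B^\exter(E)$ defined above are precisely the real and imaginary parts of the weighted boundary resolvent appearing in the abstract framework, with $B^\exter(E)\geq 0$ inherited from $\Im R^\exter(E+i0)\geq 0$.

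For \eqref{eq:S-mat-kernel} I would start from the stationary representation \eqref{eq:S-mat-rep-1a} applied to $(H, H^\exter)$. Using the identity $\sqrt{V_0}(\mathcal{F}^\exter_E)^*\mathcal{F}^\exter_E\sqrt{V_0} = \pi^{-1}B^\exter(E)$ for the analog of the fibre diagonalisation map associated with $H^\exter$, a standard Birman--Schwinger manipulation transports the eigenvalue equation $S(E;H,H^\exter)f = e^{i\theta}f$ (for $e^{i\theta}\ne 1$) to a nullity condition on $\overline{\Ran\sqrt{V_0}}$; the real parametrisation $\lambda = \cot(\theta/2)$ of the unit circle then converts the non-self-adjoint pencil $J - A^\exter(E) - iB^\exter(E)$ into the self-adjoint pencil $A^\exter(E) + \cot(\theta/2)B^\exter(E) - I$, establishing the bijection between kernels. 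The identity \eqref{eq:BS-count-formula} then follows by a homotopy argument in $\theta$: as $\theta$ sweeps over $(0,2\pi)$ the parameter $\cot(\theta/2)$ decreases monotonically from $+\infty$ to $-\infty$, and since $B^\exter(E)\geq 0$, the eigenvalues of the self-adjoint family $A^\exter(E) + \cot(\theta/2)B^\exter(E)$ move monotonically. Matching the resulting spectral flow with the normalisation $\mu(\cdot, E; H, H^\exter)\to 0$ as $E\to+\infty$ (property (iii) of Section~\ref{sec1.5}) fixes the additive constant and produces the canonical count $N((1,\infty); \cdot)$.

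The principal obstacle is verifying the weighted limiting absorption principle for $H^\exter$ at energies near $E_0$, that is, the existence and continuous dependence on $E$ of $\sqrt{V_0}R^\exter(E+i0)\sqrt{V_0}$ as a compact operator. This is where the construction of $V^\exter$ as globally non-trapping enters, via Mourre-type theory in the spirit of Robert--Tamura (\cite{RT1, RT2}); once the basic LAP is known, the compact support of $V_0$ makes the weighted estimates routine. A secondary bookkeeping point is the correct handling of the sign $J = -I$ in the abstract framework of \cite{Push1}; this sign is absorbed into the parametrisation $\cot(\theta/2)$, so that the count appears in the canonical positive form $N((1,\infty); \cdot)$ rather than in a mirrored form on $(-\infty,-1)$.
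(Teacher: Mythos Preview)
Your proposal is correct and matches the paper's approach: the paper does not give an independent proof of this proposition but simply cites \cite[Section~5]{Push1} (with an alternative reference to \cite[Section~4]{Push2}) and remarks that it is a consequence of the stationary representation \eqref{eq:S-mat-rep-1a}. Your sketch --- verifying that the abstract hypotheses of \cite{Push1} hold for the concrete pair $(H,H^\exter)$ via the limiting absorption principle for the non-trapping potential $V^\exter$, and then tracking the sign $J=-I$ through the Cayley-type parametrisation $\lambda=\cot(\theta/2)$ --- is exactly the content that the citation is meant to cover, so there is nothing to add.

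One small remark: the proposition is asserted for \emph{all} $E>0$, not only $E$ near $E_0$, so the LAP you invoke should be the standard one for short-range smooth potentials (absence of positive eigenvalues plus Mourre theory), rather than the semiclassical non-trapping estimate of Proposition~\ref{prop5.1}; the latter is only needed later for the quantitative bounds.
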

See also \cite[Section~4]{Push2} for an alternative proof. 
Proposition~\ref{BS-count-prop} is a consequence of the stationary 
representation \eqref{eq:S-mat-rep-1a} for the scattering matrix. 
This circle of ideas goes back to 
\cite{SY} and perhaps even to \cite{Kato}.

Now we can prove \eqref{b5} as follows. 
According to \eqref{eq:BS-count-formula} with $\theta=\pi$, 
\begin{equation}\label{eq:counting-eq-lem-1}
\m(-1,E;H,H^\exter) =N((1,\infty),A^\exter(E)).
\end{equation}
On the other hand, by the Birman-Schwinger principle, 
\begin{equation}\label{eq:counting-eq-lem-2}
N((-\infty,E);H^\inter) =N\bigpare{(1,\infty); \sqrt{V_0}(H^\inter+V_0-E)^{-1}\sqrt{V_0}}.
\end{equation}
We recall that by our assumptions (see Section~\ref{sec1.2a}), 
we have $V^\inter+V_0\geq E_+$ everywhere, and therefore $H^\inter+V_0\geq E_+$; 
thus, the inverse operator in the r.h.s. of \eqref{eq:counting-eq-lem-2}
is well defined. 
Using tunnelling  and non-trapping estimates, in Section~\ref{sec4} we prove that 
the right hand sides of \eqref{eq:counting-eq-lem-1} and \eqref{eq:counting-eq-lem-2}
coincide for $E$ outside exponentially small neighbourhoods
of resonant energies. This yields \eqref{b5} for relevant values of $\theta$.

\section{Perturbation theory for spectral flow}\label{sec3}

\subsection{Perturbation result}

The proof of Theorem~\ref{main-theorem} is achieved by applying a version of perturbation theory for 
unitary families to the representation \eqref{eq:Smatrix-chain-rule}. Thus, our aim in this section 
is to consider the unitary families of the type $M(t)=\widetilde U(t) U(t)$ and to prove 
the following statement. 

\begin{thm}\label{sf-ptb-thm}
Let $U(t)$ and $\widetilde U(t)$, $t\in [0,1]$, be norm continuous families of unitary operators 
in a Hilbert space $\mathcal{H}$ such that $U(t)-I$ and $\widetilde U(t)-I$ are compact for all $t$. 
Assume that $U(0)=\widetilde U(0)=I$. Assume also that for some $\f\in (0,\pi)$ one has 
\begin{equation}\label{eq:sf-ptb-thm-1}
\s(\widetilde U(1))\subset \bigset{e^{i\chi}}{-\f\leq \chi\leq \f}.
\end{equation}
Denote $M(t)=\widetilde U(t)U(t)$ and 
\begin{equation}\label{eq:sf-ptb-thm-2}
m=\mbox{\rm sf}(-1;\{\widetilde U(t)\}_{t\in[0,1]}). 
\end{equation}
The for any $\th\in (\f,2\pi-\f)$ one has 
\begin{equation}\label{eq:sf-ptb-thm-3}
\begin{split}
\mbox{\rm sf}(e^{i(\th+\f)};\{U(t)\}_{t\in[0,1]})+m 
&\leq \mbox{\rm sf}(e^{i\th};\{M(t)\}_{t\in[0,1]})\\
&\leq \mbox{\rm sf}(e^{i(\th-\f)};\{U(t)\}_{t\in[0,1]})+m.
\end{split}
\end{equation}
\end{thm}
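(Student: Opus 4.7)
The plan is to decompose the spectral flow of $M(t) = \widetilde U(t) U(t)$ via a two-parameter homotopy and then evaluate the two resulting pieces separately, using a straight-line reference path for one and a Weyl-type inequality for the other.

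First, I would consider the continuous two-parameter family $F(s,t) = \widetilde U(s) U(t)$, $(s,t) \in [0,1]^2$, all of whose members are compact perturbations of $I$. The diagonal path $t \mapsto F(t,t) = M(t)$ is homotopic, rel.\ endpoints $I$ and $\widetilde U(1) U(1)$, within this family to the L-shaped concatenation of $t \mapsto F(t, 0) = \widetilde U(t)$ followed by $t \mapsto F(1, t) = \widetilde U(1) U(t)$. Since spectral flow is homotopy invariant in the class of compact perturbations of $I$,
\begin{equation*}
\sflow(e^{i\theta}; \{M(t)\}_{t\in[0,1]}) = \sflow(e^{i\theta}; \{\widetilde U(t)\}_{t\in[0,1]}) + \sflow(e^{i\theta}; \{\widetilde U(1) U(t)\}_{t\in[0,1]}).
\end{equation*}

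Next, I would show that $\sflow(e^{i\theta}; \widetilde U) = m$ for every $\theta \in (\varphi, 2\pi - \varphi)$ by introducing a straight-line reference. Since $\sigma(\widetilde U(1)) \subset \{e^{i\chi} : |\chi| \leq \varphi\}$ and $\varphi < \pi$, the principal-branch logarithm yields $\widetilde U(1) = e^{iA}$ with $A$ compact self-adjoint and $\|A\| \leq \varphi$. Setting $\widetilde V(s) := e^{isA}$, one has $\widetilde V(0) = I$, $\widetilde V(1) = \widetilde U(1)$, and $\sigma(\widetilde V(s)) \subset \{|\chi| \leq \varphi\}$ throughout. The concatenation $L = \widetilde U \ast \widetilde V^{-1}$ is then a loop at $I$ in the compact-perturbation class, and the map $\theta \mapsto \sflow(e^{i\theta}; L)$ is locally constant on $(0, 2\pi)$: the only potential $\theta$-jumps come from the spectra of the endpoints $L(0) = L(1) = I$ (contributing only at $\theta = 0$), while the potential jumps from intermediate partition-point spectra cancel by a telescoping argument. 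Evaluating this constant at $\theta = \pi$ gives $\sflow(-1;\widetilde U) - \sflow(-1; \widetilde V) = m - 0 = m$, using that $-1 \notin \sigma(\widetilde V(s))$ for any $s$. Moreover $\sflow(e^{i\theta}; \widetilde V) = 0$ for $\theta \in (\varphi, 2\pi - \varphi)$ since $e^{i\theta}$ too is never in $\sigma(\widetilde V(s))$. Combining, $\sflow(e^{i\theta}; \widetilde U) = m$ on this range.

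For the remaining term $\sflow(e^{i\theta}; \widetilde U(1) U(\cdot))$, I would invoke a Weyl-type min-max inequality for unitaries: if $V - I$ is compact with $\sigma(V) \subset \{|\chi| \leq \varphi\}$ and $W - I$ is compact, then for any admissible reference angle $\theta_0$,
\begin{equation*}
N(e^{i(\theta+\varphi)}, e^{i\theta_0}; W) \leq N(e^{i\theta}, e^{i\theta_0}; V W) \leq N(e^{i(\theta-\varphi)}, e^{i\theta_0}; W).
\end{equation*}
Applying this with $V = \widetilde U(1)$ and $W = U(t)$ along a partition of $[0,1]$ chosen so that a good $\theta_0$ exists on each subinterval, and using the partitioned definition of spectral flow, one assembles
\begin{equation*}
\sflow(e^{i(\theta+\varphi)}; \{U(t)\}) \leq \sflow(e^{i\theta}; \{\widetilde U(1) U(t)\}) \leq \sflow(e^{i(\theta-\varphi)}; \{U(t)\})
\end{equation*}
for $\theta \in (\varphi, 2\pi - \varphi)$. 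Adding the contribution $m$ from $\widetilde U$ gives the stated bounds on $\sflow(e^{i\theta}; M)$.

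I expect the main obstacle to be the Weyl-type step: the min-max inequality for unitaries itself is a standard result, but marshalling the partition argument so that the reference angles $\theta_0$ can be chosen consistently across subintervals, and so that the shifted counting inequalities telescope correctly at intermediate partition points (as they did for the loop $L$ in the second paragraph), requires some care. One must also verify that the shifts $\theta \pm \varphi$ remain within $(0, 2\pi)$ on the relevant range, which is exactly guaranteed by the hypothesis $\theta \in (\varphi, 2\pi - \varphi)$.
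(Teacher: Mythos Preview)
Your first two steps are correct and in fact give a cleaner route than the paper's: the L-shaped decomposition via $F(s,t)=\widetilde U(s)U(t)$ together with your loop argument for $\sflow(e^{i\theta};\widetilde U)=m$ replaces the paper's extension of $\widetilde U$ to a closed path on $[0,2]$ and the $\mathcal H\oplus\mathcal H$ rotation homotopy used there.

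The gap is in the third step. The Weyl-type inequality you state is not a standard result, and as written it is false. Take $W=\operatorname{diag}(e^{i(\theta_0+\varepsilon)},1,1,\dots)$ and $V=\operatorname{diag}(e^{-2i\varepsilon},1,1,\dots)$ with $0<2\varepsilon<\varphi$. Then $e^{i\theta_0}\notin\sigma(W)\cup\sigma(VW)$, yet for $\theta<\theta_0-\varepsilon$ one has $N(e^{i\theta},e^{i\theta_0};VW)=1>0=N(e^{i(\theta-\varphi)},e^{i\theta_0};W)$. The point is that an eigenvalue of $W$ just \emph{above} $\theta_0$ can rotate to just below it, so shifting only the lower endpoint of the counting arc cannot compensate. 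If you also shift $\theta_0$ by $\varphi$, you can no longer use the same reference angle for both families, and the partitioned sums fail to telescope. Even granting a correct pointwise inequality, bounding each $N(\cdot;VU(t_j))$ from above does not bound the \emph{differences} $N(\cdot;VU(t_j))-N(\cdot;VU(t_{j-1}))$ that constitute the spectral flow.

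What repairs this, and is essentially what the paper does, is a \emph{dynamic} argument applied to a single fixed unitary. Write $\widetilde U(1)=e^{iA}$ and homotope your path $\{\widetilde U(1)U(t)\}$ once more (via $e^{isA}U(t)$) to the concatenation of $\{e^{i(1-s)A}\}$, $\{U(t)\}$, and $\{e^{isA}U(1)\}$; the first piece has zero flow through $e^{i\theta}$ for $\theta\in(\varphi,2\pi-\varphi)$, so you are reduced to bounding $\sflow(e^{i\theta};\{e^{isA}U(1)\}_{s\in[0,1]})$. Splitting $A=A_+-A_-$ and differentiating the analytic eigenvalue branches gives $\mu_n'(s)/(i\mu_n(s))=(A_\pm\psi_n,\psi_n)\in[0,\varphi]$, so the eigenvalues rotate \emph{monotonically} with angular speed at most $\varphi$. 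This yields directly $\sflow(e^{i\theta};\{e^{isA}U(1)\})\le N(e^{i(\theta-\varphi)},e^{i\theta};U(1))$, and since $U(0)=I$ the right-hand side equals $\sflow(e^{i(\theta-\varphi)};U)-\sflow(e^{i\theta};U)$, completing the upper bound; the lower bound is symmetric.
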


In Section~\ref{sec3b} we prove two particular cases of Theorem~\ref{sf-ptb-thm} and 
in Section~\ref{sec3c} we combine these results to obtain the full proof. 

\subsection{Preliminary statements}\label{sec3b}

The following statement is a version of Theorem~\ref{sf-ptb-thm} with $\f=0$. 

\begin{lem}\label{sf-ptb-lem-1}
Let $U(t)$ and $\widetilde U(t)$, $t\in [0,1]$, be norm continuous families of unitary operators 
such that $U(t)-I$ and $\widetilde U(t)-I$ are compact for all $t$. 
Assume $U(0)=\widetilde U(0)=\widetilde U(1)=I$. Denote $M(t)=\widetilde U(t)U(t)$ and 
$m=\mbox{\rm sf}(-1;\{\widetilde U(t)\}_{t\in[0,1]})$. Then for all $\th\in (0,2\pi)$, 
\begin{equation}\label{eq:sf-ptb-lem-1}
\mbox{\rm sf}(e^{i\th};\{M(t)\}_{t\in[0,1]}) =\mbox{\rm sf}(e^{i\th},\{U(t)\}_{t\in[0,1]})+m.
\end{equation}
\end{lem}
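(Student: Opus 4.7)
The plan is to reduce the product $M(t)=\widetilde U(t)U(t)$ to a concatenation of $\widetilde U$ followed by $U$ via a homotopy with fixed endpoints, and then to exploit the loop structure $\widetilde U(0)=\widetilde U(1)=I$. Three standard properties of spectral flow will be used: additivity under concatenation of paths, invariance under homotopies with fixed endpoints in the class of compact perturbations of $I$, and the observation that for a loop based at $I$ the spectral flow through $e^{i\theta}$ is independent of $\theta\in(0,2\pi)$.

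I would first introduce the concatenated path
\[
P(t)=\begin{cases}\widetilde U(2t), & t\in[0,1/2],\\ U(2t-1), & t\in[1/2,1],\end{cases}
\]
which is norm continuous since $\widetilde U(1)=I=U(0)$, and satisfies $P(0)=I$, $P(1)=U(1)$. Additivity of spectral flow then gives $\sflow(e^{i\theta};\{P(t)\})=\sflow(e^{i\theta};\{\widetilde U(t)\})+\sflow(e^{i\theta};\{U(t)\})$. Using $\sigma(\widetilde U(0))=\sigma(\widetilde U(1))=\{1\}$, a telescoping computation based on \eqref{eq:sf-def-2} and the elementary identity $N(e^{i\theta_1},e^{i\chi};V)-N(e^{i\theta_2},e^{i\chi};V)=N(e^{i\theta_1},e^{i\theta_2};V)$ collapses to
\[
\sflow(e^{i\theta_1};\{\widetilde U(t)\})-\sflow(e^{i\theta_2};\{\widetilde U(t)\})=N(e^{i\theta_1},e^{i\theta_2};I)-N(e^{i\theta_1},e^{i\theta_2};I)=0
\]
for all $\theta_1,\theta_2\in(0,2\pi)$; hence $\sflow(e^{i\theta};\{\widetilde U(t)\})=\sflow(-1;\{\widetilde U(t)\})=m$ for every such $\theta$.

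The remaining identification $\sflow(e^{i\theta};\{M(t)\})=\sflow(e^{i\theta};\{P(t)\})$ I would obtain from the explicit homotopy
\[
H(s,t)=\widetilde U\bigl((1-s)\min(2t,1)+st\bigr)\, U\bigl((1-s)\max(2t-1,0)+st\bigr),\quad (s,t)\in[0,1]^2,
\]
which is norm continuous, satisfies $H(1,\cdot)=M$ and $H(0,\cdot)=P$, and has fixed endpoints $H(s,0)=I$, $H(s,1)=U(1)$ (the latter relying on $\widetilde U(1)=I$ at $t=1$). Since $H(s,t)-I$ is compact throughout, homotopy invariance of the spectral flow delivers the equality, and combining with the previous paragraph gives \eqref{eq:sf-ptb-lem-1}.

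The step I expect to be most delicate is the $\theta$-independence of $\sflow(e^{i\theta};\{\widetilde U(t)\})$ starting from the partition-based definition \eqref{eq:sf-def-2}: one has to bookkeep the eigenvalue counts at the partition points $t_k$ and check the telescoping to the endpoint contributions, which vanish only because $\widetilde U(0)=\widetilde U(1)=I$ has spectrum concentrated at $1$. Once this combinatorial step is in hand, the concatenation and homotopy arguments are routine topological manipulations in the unitary group of compact perturbations of $I$.
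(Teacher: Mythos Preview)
Your proof is correct and takes a genuinely different route from the paper's. The paper works in the doubled Hilbert space $\mathcal{H}\oplus\mathcal{H}$: it compares $Q_0(t)=U(t)\oplus\widetilde U(t)$ with $Q_1(t)=\widetilde U(t)U(t)\oplus I$, which share endpoints, and connects them by conjugating $I\oplus\widetilde U(t)$ through a one-parameter family of $2\times 2$ rotations --- a Whitehead-type stabilisation argument. Your concatenation-plus-reparametrisation homotopy stays entirely in $\mathcal{H}$ and is more elementary: your explicit $H(s,t)$ is immediately seen to be norm continuous, to satisfy $H(s,t)-I$ compact, and to have fixed endpoints (the check at $t=1$ indeed using $\widetilde U(1)=I$). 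The paper's device is the more classical one and generalises to situations where one wants ``product $\simeq$ direct sum'' without the paths being concatenable; in the present situation, where $\widetilde U(1)=I=U(0)$ makes concatenation available, your argument is shorter. Both proofs share the step that $\sflow(e^{i\theta};\{\widetilde U(t)\})$ is independent of $\theta$ because $\widetilde U$ is a loop based at $I$; your telescoping computation is precisely the content of the paper's Lemma~\ref{lemmab3} specialised to the case $U(0)=U(1)=I$.
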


\begin{proof}
We first note that since $\widetilde U(0)=\widetilde U(1)=I$, we have
\begin{equation}\label{eq:sf-is-constant}
\mbox{sf}(e^{i\th};\{\widetilde U(t)\}_{t\in[0,1]})=m, \quad \text{for all }\th\in (0,2\pi).
\end{equation}
Next, in the Hilbert space $\mathcal{H}\oplus\mathcal{H}$, we consider the families 
\[
Q_0(t)=U(t)\oplus \widetilde U(t); \quad Q_1(t) =\widetilde U(t) U(t)\oplus I. 
\]
It is evident that for all $\th\in (0,2\pi)$, 
\begin{align}
& \mbox{sf}(e^{i\th};\{Q_0(t)\}_{t\in[0,1]}) =\mbox{sf}(e^{i\th};\{U(t)\}_{t\in[0,1]})+m, \label{eq:Q0=U+m}\\
& \mbox{sf}(e^{i\th};\{Q_1(t)\}_{t\in[0,1]}) =\mbox{sf}(e^{i\th};\{M(t)\}_{t\in[0,1]}).\label{eq:Q1=M}
\end{align}
We note $Q_0$ and $Q_1$ have the same end points: 
\[
Q_0(0)=Q_1(0)=I; \quad Q_0(1)=Q_1(1)=U(1)\oplus I.
\]
Below we construct a homotopy between $\{Q_0(t)\}_{t\in[0,1]}$ and $\{Q_1(t)\}_{t\in[0,1]}$. 
This will show that the left hand sides of \eqref{eq:Q0=U+m} and \eqref{eq:Q1=M} coincide and therefore 
\eqref{eq:sf-ptb-lem-1} holds true. 

We consider the following norm continuous family of unitary operators on $\mathcal{H}\oplus\mathcal{H}$:
\[
\widetilde U_\t(t) =\begin{pmatrix}\cos(\t\pi/2) & -\sin(\t\pi/2) \\ \sin(\t\pi/2) & \cos(\t\pi/2)\end{pmatrix}
\begin{pmatrix}I & 0 \\ 0 & \widetilde U(t)\end{pmatrix}
\begin{pmatrix}\cos(\t\pi/2) & \sin(\t\pi/2) \\ -\sin(\t\pi/2) & \cos(\t\pi/2)\end{pmatrix},
\]
where $t\in [0,1]$, $\t\in [0,1]$. By inspection, we learn
\begin{enumerate}
\renewcommand{\theenumi}{\roman{enumi}}
\renewcommand{\labelenumi}{(\theenumi) }
\item $\widetilde U_0(t)= I\oplus \widetilde U(t)$ for $t\in [0,1]$; 
\item $\widetilde U_1(t) = \widetilde U(t)\oplus I$ for $t\in [0,1]$;
\item $\widetilde U_\t(0)= \widetilde U_\t(1)= I\oplus I$ for $\t\in[0,1]$;
\item $\widetilde U_\t(t)-I$ is compact for all $t,\t$. 
\end{enumerate}
It follows that the family 
\[
Q_\t(t)=\widetilde U_\t(t)(U(t)\oplus I)\quad \text{in }\mathcal{H}\oplus\mathcal{H}
\]
provides the required homotopy between $Q_0$ and $Q_1$. 
\end{proof}

The following statement is related to the case when $m=0$ in the hypothesis of Theorem~\ref{sf-ptb-thm}. 

\begin{lem}\label{sf-ptb-lem-2}
Let $U$ be a unitary operator in $\mathcal{H}$ such that $U-I$ is compact. Let $A$ be a compact 
self-adjoint operator in $\mathcal{H}$ with $\norm{A}\leq \f<\pi$. Then 
\begin{align}
&\mbox{\rm sf}(e^{i\th};\{e^{itA}U\}_{t\in[0,1]}) \leq N(e^{i(\th-\f)}, e^{i\th};U), \quad \f<\th<2\pi, 
\label{eq:sf-lem-2-1}\\
&\mbox{\rm sf}(e^{i\th};\{e^{itA}U\}_{t\in[0,1]}) \geq -N(e^{i\th},e^{i(\th+\f)};U), \quad 0<\th<2\pi-\f.
\label{eq:sf-lem-2-2}
\end{align}
\end{lem}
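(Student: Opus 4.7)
The approach is to decompose $A$ into positive and negative parts and apply monotonicity to each piece after re-routing the path as a concatenation. Using the spectral theorem for compact self-adjoint operators, write $A = A_+ - A_-$ where $A_\pm\geq 0$ are compact, satisfy $A_+A_-=0$ (and hence commute), and have $\|A_\pm\|\leq\|A\|\leq\varphi$. Since $A_\pm$ commute, $e^{itA}=e^{itA_+}e^{-itA_-}$, so the direct path $\{e^{itA}U\}_{t\in[0,1]}$ is homotopic with fixed endpoints (within the class of norm-continuous unitary families whose difference from $I$ is compact) to the concatenation of $\{e^{itA_+}U\}_{t\in[0,1]}$ and $\{e^{-itA_-}e^{iA_+}U\}_{t\in[0,1]}$; an explicit homotopy is $(s,t)\mapsto e^{i\sigma_s(t)A_+}e^{-i\tau_s(t)A_-}U$ with continuous $(\sigma_s,\tau_s):[0,1]\to[0,1]^2$ interpolating between the diagonal $(t,t)$ at $s=0$ and the concatenation profile at $s=1$. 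By homotopy invariance and additivity of spectral flow it suffices to bound the spectral flow through $e^{i\theta}$ of each one-signed piece separately.

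The technical core is the following monotonicity sub-lemma: for any compact self-adjoint $B\geq 0$ with $\|B\|\leq\varphi<\pi$ and any unitary $V$ with $V-I$ compact,
\[
0 \leq \sflow(e^{i\theta};\{e^{itB}V\}_{t\in[0,1]}) \leq N(e^{i(\theta-\varphi)},e^{i\theta};V), \quad \varphi<\theta<2\pi,
\]
and, by application with $-B$ in place of $B$,
\[
-N(e^{i\theta},e^{i(\theta+\varphi)};V) \leq \sflow(e^{i\theta};\{e^{-itB}V\}_{t\in[0,1]}) \leq 0, \quad 0<\theta<2\pi-\varphi.
\]
To prove this, note that $V(t):=e^{itB}V$ is real-analytic in $t$, and that compactness of $V(t)-I$ localises the spectrum near any closed arc bounded away from $1$ to finitely many eigenvalues; Kato's analytic perturbation theory then allows one to enumerate these eigenvalues as $e^{i\chi_k(t)}$ with $\chi_k$ real-analytic. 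The Hellmann--Feynman identity gives $\chi_k'(t)=\langle B\psi_k(t),\psi_k(t)\rangle\in[0,\varphi]$ along an analytic choice of eigenvector $\psi_k$, so each $\chi_k$ is non-decreasing with total increment at most $\varphi<2\pi$. Hence every trajectory crosses $e^{i\theta}$ at most once, always anti-clockwise, and only if $\chi_k(0)\in[\theta-\varphi,\theta)$; summing over $k$ yields the upper bound, while the lower bound $0$ is immediate from the absence of clockwise crossings.

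Applying the sub-lemma to the first segment of the concatenation (generator $A_+\geq0$, contributing at most $N(e^{i(\theta-\varphi)},e^{i\theta};U)$) and to the second segment (generator $-A_-\leq0$, contributing at most $0$) gives \eqref{eq:sf-lem-2-1}. For \eqref{eq:sf-lem-2-2} I instead use the symmetric concatenation $\{e^{-itA_-}U\}$ followed by $\{e^{itA_+}e^{-iA_-}U\}$: the clockwise segment contributes at least $-N(e^{i\theta},e^{i(\theta+\varphi)};U)$ and the anti-clockwise segment contributes at least $0$. The principal obstacle is the monotonicity sub-lemma itself: eigenvalue collisions must be treated via Kato's analytic perturbation theory, and the half-open convention $[\theta_1,\theta_2)$ used by $N$ must be reconciled with the convention \eqref{eq:sf-def-1}--\eqref{eq:sf-def-2} for spectral flow, in particular for trajectories that begin or end exactly at $e^{i\theta}$.
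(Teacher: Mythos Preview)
Your proposal is correct and follows essentially the same approach as the paper: decompose $A=A_+-A_-$ into its positive and negative parts, replace the path $\{e^{itA}U\}$ by the concatenation of the one-signed pieces via a homotopy, and then use the derivative formula $\chi_k'(t)=\langle A_\pm\psi_k,\psi_k\rangle\in[0,\f]$ to establish monotone rotation with bounded angular speed, treating the lower bound via the symmetric concatenation. Your write-up is in fact a bit more explicit than the paper's (invoking Kato's analytic perturbation theory and flagging the half-open endpoint convention), but the underlying argument is the same.
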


\begin{proof}
1) Using the spectral representation of $A$, let us split $A$ into the positive and negative parts: 
\[
A=A_+-A_-, \quad \s(A_\pm)\subset [0,\f].
\]
It is easy to see that $\{e^{itA}\}_{t\in[0,1]}$ is homotopic to the path $\{\widetilde U(t)\}_{t\in[0,2]}$, where 
\[
\widetilde U(t) = 
\begin{cases} 
e^{itA_+}, \quad & 0\leq t\leq 1, \\
e^{-i(t-1)A_-} e^{iA_+}, \quad & 1\leq t \leq 2. 
\end{cases}
\]
Thus we have 
\begin{equation}\label{eq:sf-lem-2-3}
\mbox{sf}(e^{i\th};\{e^{itA}U\}_{t\in[0,1]}) = \mbox{sf}(e^{i\th};\{e^{itA_+}U\}_{t\in[0,1]}) 
+\mbox{sf}(e^{i\th};\{e^{-itA_-}e^{iA_+}U\}_{t\in[0,1]}).
\end{equation}
2) We consider the family $\{e^{itA_+}U\}_{t\in[0,1]}$. Its eigenvalues are branches of analytic functions. 
If $\m_n(t)$ is a simple eigenvalue of $e^{itA_+}U$ with the corresponding normalized egenvector $\g_n(t)$, 
then 
\[
\frac{\m'_n(t)}{i\m_n(t)}=(A_+\g_n(t),\g_n(t))\in [0,\f].
\]
This calculation shows that as $t$ increases, the eigenvalues of $e^{itA_+}U$ rotate anti-clockwise 
with the angular speed of rotation $\leq \f$. From here it clearly follows that 
\begin{equation}\label{eq:sf-lem-2-4}
\mbox{sf}(e^{i\th};\{e^{itA_+}U\}_{t\in[0,1]}) \leq N(e^{i(\th-\f)},e^{i\th};U), 
\quad \f<\th<2\pi. 
\end{equation}
3) A similar argument shows that the eigenvalues of $e^{-itA_-}e^{iA_+}U$ rotate clockwise and therefore
\begin{equation}\label{eq:sf-lem-2-5}
\mbox{sf}(e^{i\th};\{e^{-itA_-}e^{iA_+}U\}_{t\in[0,1]})\leq 0, \quad 0<\th<2\pi.
\end{equation}
Combining \eqref{eq:sf-lem-2-3}--\eqref{eq:sf-lem-2-5}, we obtain the upper bound \eqref{eq:sf-lem-2-1}. 
The lower bound \eqref{eq:sf-lem-2-2} is obtained in a similar way by using the family 
\[
\widetilde U(t) = 
\begin{cases} 
e^{-itA_-}, \quad & 0\leq t\leq 1, \\
e^{i(t-1)A_+} e^{-iA_-}, \quad & 1\leq t \leq 2. 
\end{cases}
\]
\end{proof}

\begin{lem}\label{lemmab3}
Let $\{U(t)\}_{t\in[0,1]}$ be a norm continuous family of unitary 
operators such that $U(t)-I$ is compact for all $t$. 
Then for all $\th_1,\th_2\in(0,2\pi)$ one has
\begin{multline}
\sflow(e^{i\th_1};\{U(t)\}_{t\in[0,1]})
-
\sflow(e^{i\th_2};\{U(t)\}_{t\in[0,1]})
\\
=
N(e^{i\th_1},e^{i\th_2};U(1))
-
N(e^{i\th_1},e^{i\th_2};U(0)).
\label{eqb1}
\end{multline}
\end{lem}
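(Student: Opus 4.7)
The plan is to reduce the claim to the definition \eqref{eq:sf-def-1} of spectral flow via a simple additivity identity for the counting function $N(\cdot,\cdot;U)$ and then extend it to the general case through the partitioning construction \eqref{eq:sf-def-2}. Once the identity is in hand, both steps are essentially formal.

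The key preliminary step is to establish the following additivity identity: for any unitary $U$ with $U-I$ compact and any $\theta_1,\theta_2,\theta_0 \in (0,2\pi)$,
\begin{equation}
N(e^{i\theta_1}, e^{i\theta_0}; U) - N(e^{i\theta_2}, e^{i\theta_0}; U) = N(e^{i\theta_1}, e^{i\theta_2}; U).
\label{eq:Nadd}
\end{equation}
This is a short case analysis based on the cyclic order of $\theta_0, \theta_1, \theta_2$: in the representative case $\theta_1 < \theta_2 < \theta_0$, both sides of \eqref{eq:Nadd} count the eigenvalues of $U$ (with multiplicities) whose phases lie in $[\theta_1, \theta_2)$ by \eqref{eq:EV-Count-1}; the remaining arrangements reduce to this one through the antisymmetry \eqref{eq:EV-Count-2}.

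Next I would treat the favourable situation in which there exists a single $\theta_0 \in (0,2\pi)$ satisfying \eqref{eq:good-theta} for the whole family $\{U(t)\}_{t\in[0,1]}$. Applying the definition \eqref{eq:sf-def-1} to $\theta = \theta_1$ and to $\theta = \theta_2$ and subtracting yields
\begin{align*}
\sflow(e^{i\theta_1}; \{U(t)\}_{t\in[0,1]}) &- \sflow(e^{i\theta_2}; \{U(t)\}_{t\in[0,1]}) \\
&= \bigl[N(e^{i\theta_1}, e^{i\theta_0}; U(1)) - N(e^{i\theta_2}, e^{i\theta_0}; U(1))\bigr] \\
&\quad - \bigl[N(e^{i\theta_1}, e^{i\theta_0}; U(0)) - N(e^{i\theta_2}, e^{i\theta_0}; U(0))\bigr],
\end{align*}
and applying \eqref{eq:Nadd} to each bracket gives \eqref{eqb1} immediately.

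For the general case, take a partition $0 = t_0 < t_1 < \cdots < t_n = 1$ with accompanying angles $\theta_{0,k}$ furnished by the construction preceding \eqref{eq:sf-def-2}. The single-$\theta_0$ case applies on each subinterval $\Delta_k = [t_{k-1}, t_k]$ with $\theta_0 = \theta_{0,k}$, giving
\begin{equation*}
\sflow(e^{i\theta_1}; \{U(t)\}_{t \in \Delta_k}) - \sflow(e^{i\theta_2}; \{U(t)\}_{t \in \Delta_k}) = N(e^{i\theta_1}, e^{i\theta_2}; U(t_k)) - N(e^{i\theta_1}, e^{i\theta_2}; U(t_{k-1})).
\end{equation*}
Summing over $k$, the left-hand side collapses to $\sflow(e^{i\theta_1}) - \sflow(e^{i\theta_2})$ by \eqref{eq:sf-def-2}, while the right-hand side telescopes to $N(e^{i\theta_1}, e^{i\theta_2}; U(1)) - N(e^{i\theta_1}, e^{i\theta_2}; U(0))$, which is exactly \eqref{eqb1}. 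There is no genuine obstacle here; the only step requiring any real care is the case analysis for \eqref{eq:Nadd}, and that is bookkeeping rather than a conceptual difficulty.
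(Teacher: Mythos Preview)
Your proof is correct and follows essentially the same approach as the paper: first handle the case where a single $\theta_0$ satisfying \eqref{eq:good-theta} exists by subtracting the two instances of \eqref{eq:sf-def-1} and using the additivity of $N$, then pass to the general case by partitioning $[0,1]$ and telescoping. The only difference is cosmetic---you state the additivity identity \eqref{eq:Nadd} explicitly, whereas the paper uses it implicitly in a single line.
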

\begin{proof}
First suppose that there exists $\theta_0\in(0,2\pi)$ 
such that \eqref{eq:good-theta} holds true. 
Then, by the definition \eqref{eq:sf-def-1}, the left hand side of 
\eqref{eqb1} becomes
\begin{multline*}
N(e^{i\th_1},e^{i\th_0};U(1))
-
N(e^{i\th_1},e^{i\th_0};U(0))
-
N(e^{i\th_2},e^{i\th_0};U(1))
\\
+
N(e^{i\th_2},e^{i\th_0};U(0))
=
N(e^{i\th_1},e^{i\th_2};U(1))
-
N(e^{i\th_1},e^{i\th_2};U(0)),
\end{multline*}
so the statement is proven. 
The general case follows by applying the above result to each 
of the intervals $\Delta_j$ (see \eqref{eq:sf-def-2}), 
which leads to telescopic sums in both left and right
hand sides of \eqref{eqb1}. 
\end{proof}

\subsection{Proof of Theorem~\ref{sf-ptb-thm}}\label{sec3c}

1) By our assumption \eqref{eq:sf-ptb-thm-1}, we can write $\widetilde U(1)=e^{iA}$, 
where $A$ is a compact self-adjoint operator with $\norm{A}\leq \f$. 
Now set 
\[
\widetilde U(t) =e^{i(2-t)A}, \quad \text{and} \quad M(t)=\widetilde U(t) U(1), \quad\text{for } t\in [1,2].
\]
We obtain 
\begin{align}
\sflow(e^{i\th};\{M(t)\}_{t\in[0,1]}) 
&= \sflow(e^{i\th};\{M(t)\}_{t\in[0,2]}) 
-\sflow(e^{i\th};\{M(t)\}_{t\in[1,2]})
\notag
\\
&=\sflow(e^{i\th};\{M(t)\}_{t\in[0,2]}) 
-\sflow(e^{i\th};\{e^{i(1-t)A}U(1)\}_{t\in[0,1]})
\notag
\\
&=\sflow(e^{i\th};\{M(t)\}_{t\in[0,2]}) 
+\sflow(e^{i\th};\{e^{itA}U(1)\}_{t\in[0,1]}).
\label{eq:sf-ptb-thm-4}
\end{align}
2) Since $-1\notin \s(e^{itA})=I$ for $t\in[0,1]$, it is easy to see that
\[
\mbox{sf}(-1;\{\widetilde U(t)\}_{t\in[0,2]}) = \mbox{sf}(-1,\{\widetilde U(t)\}_{t\in[0,1]})=m.
\]
Since $\widetilde U(0)=\widetilde U(2)$, we can apply 
Lemma~\ref{sf-ptb-lem-1} to the family $\{M(t)\}_{t\in[0,2]}$. 
This yields
\begin{equation}\label{eq:sf-ptb-thm-5}
\mbox{sf}(e^{i\th}; \{M(t)\}_{t\in[0,2]}) =\mbox{sf}(e^{i\th};\{U(t)\}_{t\in[0,1]})+m.
\end{equation}
Combining \eqref{eq:sf-ptb-thm-4}, \eqref{eq:sf-ptb-thm-5} and 
the estimates of Lemma~\ref{sf-ptb-lem-2}, we obtain 
\begin{align}
\mbox{sf}(e^{i\th};\{M(t)\}_{t\in[0,1]})&\leq m+ \mbox{sf}(e^{i\th};\{U(t)\}_{t\in[0,1]})\nonumber \\
&\quad +N(e^{i(\th-\f)},e^{i\th};U(1)), \quad \f<\th<2\pi, \label{eq:sf-ptb-thm-6} \\
\mbox{sf}(e^{i\th};\{M(t)\}_{t\in[0,1]})&\geq m+ \mbox{sf}(e^{i\th};\{U(t)\}_{t\in[0,1]})\nonumber \\
&\quad -N(e^{i\th},e^{i(\th+\f)};U(1)), \quad 0<\th<2\pi-\f.\label{eq:sf-ptb-thm-7}
\end{align}
3) 
By Lemma~\ref{lemmab3}, taking into account $U(0)=I$, we get
\begin{equation}
\label{eq:sf-ptb-thm-8}
\sflow(e^{i(\th-\f)}; \{U(t)\}_{t\in[0,1]}) 
=
\sflow(e^{i\th};\{U(t)\}_{t\in[0,1]})
+N(e^{i(\th-\f)},e^{i\th};U(1))
\end{equation}
for all $\f<\th<2\pi$.
Combining \eqref{eq:sf-ptb-thm-6} and \eqref{eq:sf-ptb-thm-8} yields 
the upper bound in \eqref{eq:sf-ptb-thm-3}. The lower bound is obtained in the same way from 
\eqref{eq:sf-ptb-thm-7}.  \qed


\section{Proof of Theorem~\ref{main-theorem}}\label{sec4}

\subsection{Non-trapping and tunnelling resolvent estimates} 
The analytic basis of our proof is provided by Propositions~\ref{prop5.1} and \ref{prop5.2} 
below. 
The first of these results yields a semiclassical resolvent estimate
for non-trapping potentials:
\begin{prop}\cite{RT1,RT2,GM}\label{prop5.1}
Suppose that a potential $\widetilde V$ satisfies Assumption~A with some
$\rho>0$ and let $E>0$ be a non-trapping energy for $\widetilde V$ 
(i.e. Assumption~B(ii) holds true with $\re^d$ instead of $\mathcal{G}^\exter(E)$).
Then for any $s>1/2$ we have the estimate
$$
\norm{\jap{x}^{-s}(H_0+\widetilde V-E-i0)^{-1}\jap{x}^{-s}}\leq O(\h^{-1})
$$
as $\h\to0$. Furthermore, if $E$ ranges over a compact interval in 
a non-trapping energy range, then the above bound is uniform in $E$. 
\end{prop}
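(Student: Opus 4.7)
The plan is to establish the estimate through the semiclassical Mourre commutator method, adapted to the non-trapping condition. The strategy proceeds in three stages. First I would use the non-trapping assumption to construct a global escape function $a\in S^0(\re^{2d})$ such that $\{p,a\}\geq c$ on a neighbourhood of the energy shell $\{p=E\}$, where $p(x,\xi)=\abs{\xi}^2+\widetilde V(x)$ is the classical symbol of $H_0+\widetilde V$. A natural candidate is obtained by truncating and averaging $x\cdot\xi$ along the Hamilton flow: non-trapping ensures that for a trajectory with initial data in a bounded region of the energy shell, the quantity $x(t)\cdot\dot x(t)$ eventually becomes positive and grows; short-range decay of $\widetilde V$ controls the contribution at infinity and keeps the averaged symbol in $S^0$.

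Next, setting $A=\mathrm{Op}^W(a)$, symbolic calculus gives
\[
i[H_0+\widetilde V,A]=\h\,\mathrm{Op}^W(\{p,a\})+\h^2 R,
\]
with $R$ bounded uniformly in $\h$. Together with sharp G\aa rding and functional calculus, this yields a semiclassical Mourre estimate
\[
\chi(H_0+\widetilde V)\,i[H_0+\widetilde V,A]\,\chi(H_0+\widetilde V)\geq c_0\h\,\chi(H_0+\widetilde V)^2+\h^2 K
\]
for $\chi\in C^\infty_0$ supported in a small neighbourhood of $E$, with $K$ compact. The usual Mourre differential-inequality argument, applied to $F(\varepsilon)=\jap{A}^{-s}(H-E-i\varepsilon)^{-1}\jap{A}^{-s}$ and tracking the explicit factor of $\h$ in the commutator lower bound, then produces
\[
\bignorm{\jap{A}^{-s}(H_0+\widetilde V-E-i0)^{-1}\jap{A}^{-s}}\leq C\h^{-1}, \quad s>1/2,
\]
uniformly for $E$ ranging over a compact subset of the non-trapping energy range.

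The final stage is to trade the weight $\jap{A}^{-s}$ for $\jap{x}^{-s}$. This I would achieve by choosing the escape function so that $a$ grows like $\abs{x}$ outside a compact subset of phase space (for instance, by gluing the averaged symbol to $x\cdot\xi/\jap{\xi}$ at infinity and exploiting that on the energy shell $\abs{\xi}$ is bounded above and, away from turning points, below); pseudodifferential calculus then makes $\jap{x}^{s}\jap{A}^{-s}$ bounded, and composing with the bound above yields the claim.

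The main technical obstacle is the construction of the escape function itself: one needs a symbol that is simultaneously in an appropriate bounded symbol class (so that $A$ is a legitimate semiclassical conjugate operator), strictly positive under the Poisson bracket with $p$ on the entire energy surface (using only the integrated non-trapping information), and with weight-exchange properties matching $\jap{x}$. Once the escape function is in hand, the remaining analysis is the by-now-standard semiclassical Mourre machinery, and the uniformity in $E$ over compact non-trapping intervals follows automatically from the openness of the non-trapping condition and the continuous dependence of the construction on $E$.
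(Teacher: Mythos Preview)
The paper does not prove this proposition: it is quoted from the literature, with references \cite{RT1,RT2,GM}, and used as a black box. Your outline follows essentially the Mourre--commutator route of G\'erard--Martinez \cite{GM} (the Robert--Tamura papers \cite{RT1,RT2} proceed instead via an Isozaki--Kitada type parametrix), so in spirit you are reproducing one of the cited proofs rather than offering an alternative.

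One internal inconsistency in your sketch is worth flagging. You first declare the escape function $a\in S^0(\re^{2d})$, i.e.\ a bounded symbol, but in the final stage you need $a$ to grow like $\abs{x}$ at infinity so that $\jap{x}^s\jap{A}^{-s}$ is bounded. These two requirements are incompatible: a genuinely bounded escape function would give the $\jap{A}^{-s}$ weighted bound but not the conversion to $\jap{x}^{-s}$. In the actual G\'erard--Martinez argument the escape symbol is of order $1$ in $x$ (roughly $x\cdot\xi$ near infinity on the energy shell), and one works with conjugate operators that are unbounded; the Mourre machinery still applies because the relevant multiple commutators remain controlled. So your plan is right, but you should state the symbol class correctly and be explicit that $A$ is an unbounded first-order operator in $x$, which is what makes the weight exchange in the last step legitimate.
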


The second result crucial for us is known as a tunnelling estimate; 
it goes back to Agmon \cite{Ag},
see also \cite{HS2,Na0} or \cite[Section~6]{DS}.
Fix a compact set $K\subset \re^d$ and let $\chi_K$ be the characteristic function 
of $K$ in $\re^d$.
Consider a potential $\widetilde V\in C(\re^d)$, $\widetilde V\geq 0$, 
and let $E<0$. Let 
$\mathbf{d}x^2 = (\widetilde V(x)-E)_+dx^2$ be the Agmon metric for $\widetilde V$ at the energy 
$E$ and let $\mathbf{d}(x,K)$ be the corresponding Agmon distance from $x$ to $K$. 

\begin{prop}\cite[Section~6]{DS}\label{prop5.2}
For any $\varepsilon>0$ there exists $C_\varepsilon>0$ such that for all sufficiently small $\h>0$, the estimate
$$
\norm{e^{(\mathbf{d}(x,K)-\varepsilon)/\h}(H_0+\widetilde V-E)^{-1}\chi_K}\leq C_\varepsilon
$$
holds true. 
\end{prop}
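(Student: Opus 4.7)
The plan is to prove the estimate via the classical Agmon multiplier method, i.e.\ a weighted energy identity for the operator conjugated by $e^{\phi/\h}$ for a suitable Lipschitz weight $\phi$. Given $g\in L^2(\re^d)$, I set $f=\chi_K g$ and $u=(H_0+\widetilde V-E)^{-1}f$; the statement then reduces to establishing a bound $\bignorm{e^{\phi/\h}u}\leq C_\varepsilon\norm{g}$ for some $\phi$ with $\phi(x)\geq \mathbf{d}(x,K)-\varepsilon$ and the eikonal inequality $|\nabla\phi(x)|^2\leq(1-\delta)(\widetilde V(x)-E)$ for some $\delta>0$. Because $\widetilde V\geq 0$ and $E<0$, the quantity $\widetilde V-E\geq\abs{E}$ is strictly positive everywhere, which leaves room to enforce such a strict inequality.

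The key computation is the standard weighted energy identity, obtained via integration by parts (first for a bounded version of $\phi$ to ensure a priori finiteness of the weighted $L^2$ quantities):
\begin{equation*}
\Re\bigpare{e^{\phi/\h}(H_0+\widetilde V-E)u,\,e^{\phi/\h}u}
=\h^2\bignorm{\nabla(e^{\phi/\h}u)}^2
+\bigpare{(\widetilde V-E-|\nabla\phi|^2)e^{\phi/\h}u,\,e^{\phi/\h}u}.
\end{equation*}
The eikonal bound gives $\widetilde V-E-|\nabla\phi|^2\geq\delta(\widetilde V-E)\geq\delta\abs{E}>0$ pointwise, so the right-hand side dominates $\delta\abs{E}\bignorm{e^{\phi/\h}u}^2$. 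On the left, $\Re\bigpare{e^{\phi/\h}f,e^{\phi/\h}u}$ is bounded by $\bignorm{e^{\phi/\h}f}\cdot\bignorm{e^{\phi/\h}u}$, and by arranging $\phi\leq 0$ on $K$ (compatible with $\mathbf{d}(\cdot,K)=0$ on $K$) I have $\bignorm{e^{\phi/\h}f}\leq\norm{f}\leq\norm{g}$. Cauchy--Schwarz and absorption then yield $\bignorm{e^{\phi/\h}u}\leq C_\varepsilon\norm{g}$, which is the claimed operator-norm bound after taking supremum over $g$.

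The main obstacle is the construction of such a $\phi$ and the limiting procedure that recovers the weight $\mathbf{d}(\cdot,K)-\varepsilon$. I would define a bounded approximant $\phi_R$ by truncating $(1-\delta)\mathbf{d}(\cdot,K)-\varepsilon/2$ at a large level $R$ and smoothing with a mollifier of small scale; the fundamental property $|\nabla\mathbf{d}(\cdot,K)|^2\leq\widetilde V-E$ almost everywhere (which is essentially the definition of the Agmon metric as the geodesic distance for $(\widetilde V-E)\,dx^2$) becomes the strict inequality $|\nabla\phi_R|^2\leq(1-\delta')(\widetilde V-E)$ for some $\delta'>0$, with the mollification and truncation errors absorbed into the slack. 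After obtaining the weighted bound for $\phi_R$ with a constant independent of $R$, one passes $R\to\infty$ by monotone convergence. The delicate point is verifying that the constants remain uniform in $R$; this relies on the uniform positivity of the symbol $\widetilde V-E-|\nabla\phi_R|^2$ and on careful control of the commutator terms generated by the cutoff, which is standard in Agmon theory but requires careful bookkeeping in the transition region.
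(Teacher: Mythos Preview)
The paper does not supply a proof of this proposition; it is quoted from \cite[Section~6]{DS}. Your outline is exactly the Agmon conjugation argument used there: the weighted identity you write is correct, the eikonal inequality gives uniform positivity of $\widetilde V-E-\abs{\nabla\phi}^2$, and the Cauchy--Schwarz absorption goes through as you describe.

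There is, however, one point that does not close as written. You need $\phi(x)\geq\mathbf{d}(x,K)-\varepsilon$ everywhere, but your construction $\phi=(1-\delta)\mathbf{d}(\cdot,K)-\varepsilon/2$ (after truncation and $R\to\infty$) satisfies this only on the bounded set $\{\mathbf{d}\leq\varepsilon/(2\delta)\}$; since $\widetilde V-E\geq\abs{E}>0$, the Agmon distance $\mathbf{d}(\cdot,K)$ is unbounded and your weight falls below $\mathbf{d}-\varepsilon$ at infinity. What the Agmon method actually delivers --- and what one finds in \cite{DS} --- is the estimate with \emph{multiplicative} slack,
\[
\bignorm{e^{(1-\varepsilon)\mathbf{d}(x,K)/\h}(H_0+\widetilde V-E)^{-1}\chi_K}\leq C_\varepsilon,
\]
and that is precisely what your argument yields. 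In fact the additive form in the proposition appears to be too strong as a global $L^2\to L^2$ bound: already for $\widetilde V\equiv 0$, $E<0$, $d=3$ the resolvent kernel is $e^{-\abs{x-y}/\h}/(4\pi\h^2\abs{x-y})$, and once the exponential is cancelled by the weight the remaining $\abs{x}^{-1}$ prefactor is not square-integrable at infinity. This is immaterial for the paper: in the only application, the passage to \eqref{eq5.7} in the proof of Lemma~\ref{tunnel-est-lem-1}, the weighted resolvent is multiplied by $\jap{x}^s(V-V^\inter)e^{-\phi/\h}$, and the linear growth of $\mathbf{d}(x,K)$ at infinity makes this factor bounded with the required exponential smallness. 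The multiplicative-slack estimate you actually prove is entirely sufficient there.
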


Combining the above two results, we obtain the  following key estimates:

\begin{lem}\label{tunnel-est-lem-1}
There exist positive constants $\d$, $\alpha$ such that for all $E$ satisfying $|E-E_0|<\d$ one has 
\begin{align}
\label{eq:tunnel-est-1}
&\norm{B^\exter(E)}
\leq e^{-2\alpha/\h},
\\
\label{eq:tunnel-est-2}
&\bignorm{A^\exter(E)-\sqrt{V_0}(H^\inter+V_0-E)^{-1}\sqrt{V_0}}
\leq e^{-2\alpha/\h},
\end{align}
provided $\h>0$ is sufficiently small. 
\end{lem}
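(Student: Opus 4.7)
The plan is to reduce both estimates to a single ``master'' bound and then establish it via the resolvent identity and tunnelling.  Set $K=H^\inter+V_0$.  Since $V^\inter+V_0\geq E_+$ everywhere (it equals $V^\exter\geq E_+'$ on $\O_1$ and $V^\inter\geq E_+$ on $\re^d\setminus\O_1$), we have $K\geq E_+$, so for $|E-E_0|<\d$ sufficiently small the operator $(K-E)^{-1}$ is bounded and \emph{self-adjoint}.  Consequently
\[
A^\exter(E)-\sqrt{V_0}(K-E)^{-1}\sqrt{V_0}=\Re\bigbrac{\sqrt{V_0}R^\exter(E+i0)\sqrt{V_0}-\sqrt{V_0}(K-E)^{-1}\sqrt{V_0}},
\]
\[
B^\exter(E)=\Im\bigbrac{\sqrt{V_0}R^\exter(E+i0)\sqrt{V_0}-\sqrt{V_0}(K-E)^{-1}\sqrt{V_0}},
\]
so both \eqref{eq:tunnel-est-1} and \eqref{eq:tunnel-est-2} follow from
\begin{equation}\label{eq:master}
\bignorm{\sqrt{V_0}R^\exter(E+i0)\sqrt{V_0}-\sqrt{V_0}(K-E)^{-1}\sqrt{V_0}}\leq e^{-2\a/\h}.
\end{equation}

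To establish \eqref{eq:master}, apply the resolvent identity with $W:=H^\exter-K=V^\exter-V^\inter-V_0$.  A direct check using the definitions of $V^\inter,V^\exter$ shows $W\equiv0$ on $\O_2$: on $\O_1$ we have $V^\inter=V$ and $V_0=V^\exter-V$, so $V^\exter-V^\inter-V_0=V-V^\inter=0$; on $\O_2\setminus\O_1$ we have $V^\inter=V^\exter=V$ and $V_0=0$.  Hence $\supp W\subset\re^d\setminus\O_2$, and
\[
\sqrt{V_0}R^\exter(E+i0)\sqrt{V_0}-\sqrt{V_0}(K-E)^{-1}\sqrt{V_0}=-\sqrt{V_0}R^\exter(E+i0)\,W\,(K-E)^{-1}\sqrt{V_0}.
\]
Let $\chi$ be a bounded cutoff (equal to $1$ on $\supp W$, vanishing on a neighbourhood of $\overline{\O_1}$, with some care at infinity). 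It suffices to show the two tunnelling bounds
\begin{equation}\label{eq:two-bounds}
\bignorm{\sqrt{V_0}R^\exter(E+i0)\,\chi}\leq Ce^{-\a_1/\h},\qquad \bignorm{\chi\,(K-E)^{-1}\sqrt{V_0}}\leq Ce^{-\a_2/\h},
\end{equation}
with $\a_1+\a_2>2\a$: the middle factor $W$ is bounded, so the product of \eqref{eq:two-bounds} gives \eqref{eq:master}.

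The second bound in \eqref{eq:two-bounds} is a direct application of the Agmon tunnelling estimate of Proposition~\ref{prop5.2}, since $E$ lies below $\s(K)\subset[E_+,\infty)$: a constant shift makes the situation fit the hypothesis of Proposition~\ref{prop5.2}, and the Agmon distance (in the metric $(V^\inter+V_0-E)_+\,dx^2$) between $\supp\sqrt{V_0}\subset\overline{\O_1}$ and $\supp\chi\subset\re^d\setminus\O_2$ is bounded below by a positive constant $\a_2$, because $V^\inter+V_0=V>E_+>E$ on $\O_2\setminus\O_1$ by \eqref{eq:class-forbid-ass} and a similar lower bound holds outside $\O_2$.

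The main obstacle is the first bound in \eqref{eq:two-bounds}: here $E$ lies \emph{in} the continuous spectrum of $H^\exter$, so Proposition~\ref{prop5.2} does not apply directly.  The plan is a Combes-Thomas/Agmon conjugation combined with the non-trapping estimate of Proposition~\ref{prop5.1}.  Choose a Lipschitz function $\f\geq0$ satisfying $|\nabla\f|^2\leq(1-\e)(V^\exter-E)_+$, with $\f\equiv\a_1$ on $\supp\sqrt{V_0}$ and $\f\equiv0$ on $\supp\chi$; such a $\f$ exists because the Agmon distance between $\overline{\O_1}$ and $\re^d\setminus\O_2$ in the metric $(V^\exter-E)_+\,dx^2$ is bounded below, thanks to $V^\exter\geq E_+'>E$ on $\overline{\O_1}$ and $V^\exter=V>E_+>E$ on $\O_2\setminus\O_1$.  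The conjugated operator $e^{\f/\h}H^\exter e^{-\f/\h}=H^\exter-|\nabla\f|^2+\h(\text{first-order in }\nabla)$ has a strictly positive ``real part'' where $\nabla\f\neq0$ (i.e.\ inside the barrier), while outside $\O_2$ one has $\f=0$ and one uses the non-trapping estimate of Proposition~\ref{prop5.1}.  Standard patching (inserting cutoffs supported in $\O_2\setminus\O_1$ and in $\re^d\setminus\O_2$, using the commutator identities) then yields
\[
\bignorm{e^{\f/\h}R^\exter(E+i0)e^{-\f/\h}\chi}=O(\h^{-N}),
\]
from which
\[
\bignorm{\sqrt{V_0}R^\exter(E+i0)\chi}\leq\bignorm{\sqrt{V_0}e^{-\f/\h}}\bignorm{e^{\f/\h}R^\exter(E+i0)e^{-\f/\h}\chi}\leq C\h^{-N}e^{-\a_1/\h}
\]
follows.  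Choosing $\a_1,\a_2$ slightly larger than $\a$ absorbs the polynomial factors and yields \eqref{eq:master}.
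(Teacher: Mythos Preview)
Your reduction to the ``master bound'' \eqref{eq:master} and the resolvent identity step are exactly what the paper does: writing $D(E)=\sqrt{V_0}\bigl(R^\exter(E+i0)-(K-E)^{-1}\bigr)\sqrt{V_0}$ with $K=H^\inter+V_0$, noting that $(K-E)^{-1}$ is self-adjoint, and computing $H^\exter-K=V-V^\inter$ (your $W$), supported in $\re^d\setminus\O_2$. Up to this point the two proofs agree.

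The divergence comes in the estimation of the product
\[
-\sqrt{V_0}\,R^\exter(E+i0)\,(V-V^\inter)\,(K-E)^{-1}\sqrt{V_0}.
\]
You try to extract exponential decay from \emph{both} factors, which forces you into the difficult ``tunnelling through the barrier at a spectral energy'' estimate for $R^\exter(E+i0)$. The paper avoids this entirely: it inserts the weight $\jap{x}^{-s}$ (rather than your cutoff $\chi$) and is content with the \emph{polynomial} non-trapping bound
\[
\bignorm{\sqrt{V_0}\,R^\exter(E+i0)\,\jap{x}^{-s}}\leq C\h^{-1}
\]
from Proposition~\ref{prop5.1}. All the exponential smallness then comes from the single factor
\[
\jap{x}^{s}(V-V^\inter)(K-E)^{-1}\sqrt{V_0},
\]
which is handled by a direct application of Proposition~\ref{prop5.2}, since $E$ lies strictly below $\s(K)$ and $\supp(V-V^\inter)$ is separated from $\supp V_0$ by a positive Agmon distance in the metric $(V^\inter+V_0-E)\,dx^2$. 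The polynomial growth of $\jap{x}^{s}(V-V^\inter)$ at infinity is harmless because the Agmon weight $e^{\mathbf{d}_E(x,\supp V_0)/\h}$ grows at least linearly in $|x|$ (the metric is bounded below by a positive constant), so it dominates any polynomial. This gives \eqref{eq:master} immediately, with no conjugation or patching.

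Your route, by contrast, has a loose end you flag yourself (``with some care at infinity''): since $\supp W=\supp(V-V^\inter)$ is unbounded, a cutoff $\chi$ equal to $1$ on it cannot decay, and then $\sqrt{V_0}R^\exter(E+i0)\chi$ need not even be a bounded operator on $L^2$ --- the limiting absorption principle requires a decaying weight on the $\chi$ side. This can be repaired, but the repair is essentially the paper's $\jap{x}^{-s}$ insertion, after which the Combes--Thomas/patching argument for the first factor becomes unnecessary. In short: your architecture is right, but you are working much harder than needed; one tunnelling estimate (Proposition~\ref{prop5.2}) on the elliptic factor $(K-E)^{-1}$, paired with the cheap $O(\h^{-1})$ non-trapping bound on $R^\exter$, already suffices.
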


\begin{proof}
1)
We choose $\d>0$ sufficiently small that $V^\inter(x)+V_0(x)\geq E_0+2\d$ for all $x\in\re^d$. 
Denote
\[
D(E)
= 
\sqrt{V_0}\bigpare{R^\exter(E+i0)-(H^\inter+V_0-E)^{-1}}\sqrt{V_0}.
\]
Since $(H^\inter+V_0-E)^{-1}$ is self-adjoint, recalling the definition of operators $A^\exter$, $B^\exter$, 
we obtain
$$
A^\exter(E)-\sqrt{V_0}(H^\inter+V_0-E)^{-1}\sqrt{V_0}
=
\Re D(E), 
\quad 
B^\exter(E)=\Im D(E).
$$
Thus, it suffices to prove the estimate
\begin{equation}
\norm{D(E)}\leq e^{-\k/\h}
\label{eq5.3} 
\end{equation}
for $\abs{E-E_0}<\delta$ and $\h>0$ sufficiently small. 

2) Let us prove \eqref{eq5.3}. 
By the second resolvent equation, we have
\begin{align}
D(E)&=-\sqrt{V_0} R^\exter(E+i0)(V-V^\inter)(H^\inter+V_0-E)^{-1}\sqrt{V_0}
\notag
\\
&= -\sqrt{V_0} R^\exter(E+i0)\jap{x}^{-s} \cdot   \jap{x}^s(V-V^\inter)(H^\inter+V_0-E)^{-1}\sqrt{V_0},
\label{eq5.4}
\end{align}
where $s>1/2$. 
By Proposition~\ref{prop5.1}, we have 
\begin{equation}
\norm{\sqrt{V_0}R^\exter(E+i0)\jap{x}^{-s}}
\leq 
C\h^{-1}
\label{eq5.5}
\end{equation}
for small $\h>0$. 
Next, let 
$\mathbf{d}x^2 = (V^\inter+V_0-E)dx^2$ be the Agmon metric
for the potential $V^\inter+V_0$, 
and let $\mathbf{d}_E(x,K)$ 
be the corresponding Agmon distance from 
$x\in\re^d$ to the set $K=\supp V_0$. 
By Proposition~\ref{prop5.2}, we have
\begin{equation}
\norm{e^{-(\mathbf{d}_E(x,K)-\varepsilon)/\h}(H^\inter+V_0-E)^{-1}\chi_K}\leq C_\varepsilon.
\label{eq5.6}
\end{equation}
Now note that 
$$
\supp V_0\cap \supp(V-V^\inter)\subset \Omega_1\cap \Omega_2^c = \varnothing. 
$$
Thus, \eqref{eq5.6} yields
\begin{equation}
\norm{\jap{x}^s(V-V^\inter)(H^\inter+V_0-E)^{-1}\sqrt{V_0}}\leq C_\e e^{-(\mathbf{d}-\e)/\h}
\label{eq5.7}
\end{equation}
with any $\e>0$, where
$$
\mathbf{d}=\inf\{\mathbf{d}_E(x,K)\mid x\in\supp(V-V^\inter),\quad \abs{E-E_0}<\delta\}>0.
$$
Combining \eqref{eq5.4}, \eqref{eq5.5} and \eqref{eq5.7}, we obtain \eqref{eq5.3}
with any $\alpha<\mathbf{d}/2$. 
\end{proof}

\subsection{Relating $\mu(\cdot,E;H,H^\exter)$ to $H^\inter$} 

\begin{lem}\label{counting-eq-lem}
Let the constants $\alpha$, $\delta$ be as in Lemma~\ref{tunnel-est-lem-1}. 
Then for all $E$ satisfying $|E-E_0|<\d$ and 
$\dist(E,\s(H^\inter))>e^{-\alpha/\h}$, one has 
\begin{equation}\label{eq:counting-eq}
\m(-1,E;H,H^\exter)=N((-\infty,E);H^\inter),
\end{equation}
provided $\h>0$ is sufficiently small. 
\end{lem}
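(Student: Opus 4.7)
The plan is to identify both sides of \eqref{eq:counting-eq} with counting functions $N((1,\infty);\cdot)$ of compact self-adjoint operators, and then to exploit Lemma~\ref{tunnel-est-lem-1} together with a spectral gap argument. First I would apply Proposition~\ref{BS-count-prop} with $\th=\pi$, so that $\cot(\th/2)=0$, which gives $\m(-1,E;H,H^\exter)=N((1,\infty);A^\exter(E))$. Next, writing $H^\inter=(H^\inter+V_0)-V_0$ and noting that by the assumptions in Section~\ref{sec1.2a} we have $V^\inter+V_0\geq E_+>E$ pointwise, the operator $H^\inter+V_0-E$ is strictly positive, and the standard Birman-Schwinger principle yields $N((-\infty,E);H^\inter)=N((1,\infty);K(E))$ with $K(E):=\sqrt{V_0}(H^\inter+V_0-E)^{-1}\sqrt{V_0}$. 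Thus the lemma reduces to the identity $N((1,\infty);A^\exter(E))=N((1,\infty);K(E))$.

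By Lemma~\ref{tunnel-est-lem-1} we have $\norm{A^\exter(E)-K(E)}\leq e^{-2\a/\h}$, so Weyl's monotonicity principle gives the two-sided bound $N((1+e^{-2\a/\h},\infty);K(E))\leq N((1,\infty);A^\exter(E))\leq N((1-e^{-2\a/\h},\infty);K(E))$. The key step is therefore to show that $K(E)$ has no spectrum in the exponentially small interval $(1-e^{-2\a/\h},1+e^{-2\a/\h})$, possibly after a slight enlargement of $\a$. For this I would run a Birman-Schwinger style rearrangement: if $K(E)u=\l u$ with $u\neq 0$ and $\l\neq 0$, set $\psi:=(H^\inter+V_0-E)^{-1}\sqrt{V_0}\,u$; a short direct computation shows that $\psi\neq 0$ and $(H^\inter+(1-\l^{-1})V_0-E)\psi=0$. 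Hence $E\in\s(H^\inter+(1-\l^{-1})V_0)$, and the standard self-adjoint perturbation bound gives $\dist(E,\s(H^\inter))\leq |1-\l^{-1}|\,\norm{V_0}$. Combined with the hypothesis $\dist(E,\s(H^\inter))>e^{-\a/\h}$, this forces $|\l-1|\geq c\,e^{-\a/\h}$ for some constant $c>0$ depending only on $\norm{V_0}$ (and on any fixed bound keeping $\l$ away from $0$, which is automatic near $\l=1$).

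For all sufficiently small $\h$ we have $e^{-2\a/\h}<c\,e^{-\a/\h}$, so the two-sided Weyl inequality collapses to $N((1,\infty);A^\exter(E))=N((1,\infty);K(E))$, which together with the two preliminary identities proves \eqref{eq:counting-eq}. The main obstacle, and the only nontrivial step, is the spectral gap argument: the norm estimate from Lemma~\ref{tunnel-est-lem-1} alone does not control the counting functions across the threshold $1$, and the hypothesis on $\dist(E,\s(H^\inter))$ must be used essentially, via the Birman-Schwinger rearrangement, to rule out eigenvalues of $K(E)$ clustering within $e^{-2\a/\h}$ of $1$.
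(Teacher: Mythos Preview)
Your argument is correct and follows the same overall architecture as the paper: reduce both sides of \eqref{eq:counting-eq} to $N((1,\infty);\cdot)$ of $A^\exter(E)$ and of $K(E)=\sqrt{V_0}(H^\inter+V_0-E)^{-1}\sqrt{V_0}$, invoke the tunnelling estimate \eqref{eq:tunnel-est-2} to get $\|A^\exter(E)-K(E)\|\leq e^{-2\alpha/\h}$, and then show that $K(E)$ has a spectral gap of order $e^{-\alpha/\h}$ around $1$ so that the counting functions must agree.

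The only difference is in how this last gap is produced. The paper uses the operator identity
\[
\bigl(I-K(E)\bigr)\bigl(I+\sqrt{V_0}(H^\inter-E)^{-1}\sqrt{V_0}\bigr)=I,
\]
which gives $\|(I-K(E))^{-1}\|\leq 1+e^{\alpha/\h}\|V_0\|$ directly from the hypothesis $\dist(E,\sigma(H^\inter))>e^{-\alpha/\h}$, and hence $\dist(1,\sigma(K(E)))\geq e^{-\alpha/\h}/(1+\|V_0\|)$. You instead run the Birman--Schwinger correspondence pointwise: an eigenvalue $\lambda$ of $K(E)$ forces $E\in\sigma(H^\inter+(1-\lambda^{-1})V_0)$, whence $|\lambda-1|\gtrsim e^{-\alpha/\h}$ by the perturbation bound. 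The two arguments are equivalent (your eigenvalue correspondence is precisely what underlies the paper's identity), but the paper's formulation yields the quantitative gap in one line without case analysis on $|\lambda|$, while yours makes the Birman--Schwinger mechanism more explicit.
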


\begin{proof}
1) 
Assume $\dist(E,\s(H^\inter))>e^{-\alpha/\h}$; 
our aim is to show that the right hand sides of \eqref{eq:counting-eq-lem-1} and \eqref{eq:counting-eq-lem-2} 
coincide for small $\h$. 

\noindent
2) We use the operator identity 
\begin{equation}\label{eq:counting-eq-lem-3}
\bigpare{I-\sqrt{V_0}(H^\inter+V_0-E)^{-1}\sqrt{V_0}} \bigpare{I+\sqrt{V_0}(H^\inter-E)^{-1}\sqrt{V_0}}=I.
\end{equation}
Using our assumption $\dist(E,\s(H^\inter))>e^{-\alpha/\h}$, we obtain 
\begin{align*}
\bignorm{I+\sqrt{V_0}(H^\inter-E)^{-1}\sqrt{V_0}}
&\leq 1+ \bignorm{\sqrt{V_0}}\cdot \norm{(H^\inter-E)^{-1}}\cdot \bignorm{\sqrt{V_0}}\\
&\leq 1+ e^{\alpha/\h}\norm{V_0}.
\end{align*}
Therefore, by \eqref{eq:counting-eq-lem-3}, 
\[
\Bignorm{\Bigpare{I-\sqrt{V_0}(H^\inter+V_0-E)^{-1} \sqrt{V_0}}^{-1}}\leq 1+ e^{\alpha/\h}\norm{V_0}.
\]
It follows that 
\begin{equation}\label{eq:counting-eq-lem-4}
\dist\bigpare{1,\s\bigpare{\sqrt{V_0}(H^\inter+V_0-E)^{-1}\sqrt{V_0}}}
\geq 
\frac{1}{1+e^{\alpha/\h}\norm{V_0}} 
>\frac{e^{-\alpha/\h}}{1+\norm{V_0}}.
\end{equation}
3) By the estimates \eqref{eq:tunnel-est-2} and \eqref{eq:counting-eq-lem-4}, for all 
sufficiently small $\h$ we have 
\begin{align*}
&\bignorm{A^\exter(E)-\sqrt{V_0}(H^\inter+V_0-E)^{-1}\sqrt{V_0}}
\leq 
e^{-2\alpha/\h}<\frac{e^{-\alpha/\h}}{1+\norm{V_0}}
\\
&\qquad 
<
\dist\bigpare{1,\s\bigpare{\sqrt{V_0}(H^\inter+V_0-E)^{-1}\sqrt{V_0}}},
\end{align*}
and so, applying the elementary perturbation theory for compact self-adjoint operators,
we get that
the right hand sides of \eqref{eq:counting-eq-lem-1} and \eqref{eq:counting-eq-lem-2} coincide. 
\end{proof}

\subsection{An estimate for $S(E;H,H^\exter)-I$}

We will need a corollary of 
Proposition~\ref{BS-count-prop}: 

\begin{lem}\label{S-mat-bound-lem}
Let $a=\dist(1,\s(A^\exter(E)))>0$. Then 
\begin{equation}\label{eq:S-mat-bound}
\norm{S(E;H,H^\exter)-I}\leq 
2a^{-1} \norm{B^\exter(E)}\bigpare{1+a^{-2}\norm{B^\exter(E)}^2}^{-1/2}.
\end{equation}
\end{lem}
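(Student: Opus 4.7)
The plan is to estimate $\norm{S(E;H,H^\exter)-I}$ via its spectrum, using Proposition~\ref{BS-count-prop} to translate the eigenvalue condition on $S(E;H,H^\exter)$ into a condition on the self-adjoint operators $A^\exter(E),B^\exter(E)$. The first observation is that $S:=S(E;H,H^\exter)$ is unitary with $S-I$ compact, and $S-I$ is \emph{normal}: a direct calculation using $S^*S=SS^*=I$ gives
\[
(S-I)^*(S-I)=(S-I)(S-I)^*=2I-S-S^*.
\]
Consequently $\norm{S-I}$ equals the supremum of $\abs{e^{i\th}-1}=2\abs{\sin(\th/2)}$ over $e^{i\th}\in\s(S)$. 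Since eigenvalues can accumulate only at $1$ (where $\sin(\th/2)=0$), only the non-trivial eigenvalues, i.e. $\th\in(0,2\pi)$, matter.

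Next, I would fix any such $\th\in(0,2\pi)$ with $e^{i\th}\in\s(S)$. By Proposition~\ref{BS-count-prop} applied to the pair $(H,H^\exter)$, the operator $A^\exter(E)+\cot(\th/2)B^\exter(E)$ has $1$ as an eigenvalue, so one can pick a unit vector $u$ with
\[
(I-A^\exter(E))u=\cot(\th/2)\,B^\exter(E)u.
\]
Taking norms and using that $A^\exter(E)$ is self-adjoint, the spectral theorem yields $\norm{(I-A^\exter(E))u}\geq \dist(1,\s(A^\exter(E)))\cdot\norm{u}=a$, while $\norm{B^\exter(E)u}\leq\norm{B^\exter(E)}$. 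This gives $a\leq\abs{\cot(\th/2)}\cdot\norm{B^\exter(E)}$. (If $\norm{B^\exter(E)}=0$ the same relation forces $1\in\s(A^\exter(E))$, contradicting $a>0$; then necessarily $S=I$ and the inequality is trivial, so we may assume $\norm{B^\exter(E)}>0$.)

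From $\cot^2(\th/2)\geq a^2\norm{B^\exter(E)}^{-2}$ and the identity $\sin^2(\th/2)=(1+\cot^2(\th/2))^{-1}$, I would conclude
\[
\abs{\sin(\th/2)}\leq\frac{\norm{B^\exter(E)}}{\sqrt{a^2+\norm{B^\exter(E)}^2}}
=a^{-1}\norm{B^\exter(E)}\bigpare{1+a^{-2}\norm{B^\exter(E)}^2}^{-1/2},
\]
which is uniform in $\th$. Taking the supremum over the non-trivial eigenvalue phases and multiplying by $2$ yields the claimed bound.

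The only potential obstacle is a conceptual one rather than a technical one: ensuring that the eigenvalue characterization of Proposition~\ref{BS-count-prop} is indeed available in the form needed (any non-trivial $e^{i\th}\in\s(S)$ produces a unit eigenvector $u$ satisfying the equation above). This is immediate from \eqref{eq:S-mat-kernel}, so the estimate reduces to the elementary trigonometric bookkeeping sketched above.
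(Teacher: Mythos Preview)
Your proof is correct and follows essentially the same route as the paper's: both use \eqref{eq:S-mat-kernel} to deduce that any nontrivial eigenvalue $e^{i\th}$ of $S(E;H,H^\exter)$ must satisfy $\abs{\cot(\th/2)}\,\norm{B^\exter(E)}\geq a$, and then convert this into the stated bound on $\abs{e^{i\th}-1}=2\abs{\sin(\th/2)}$ via the identity $\sin^2(\th/2)=(1+\cot^2(\th/2))^{-1}$. Your version is slightly more explicit (writing out the eigenvector equation and justifying $\norm{S-I}$ via normality), but the argument is the same.
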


\begin{proof}
By the identity \eqref{eq:S-mat-kernel} and elementary perturbation theory for self-adjoint operators, 
\[
\text{if }|\cot (\th/2)|\cdot\norm{B^\exter(E)}< a,\quad 
\text{then } e^{i\th}\notin \s(S(E;H,H^\exter)).
\]
Let $\th_0\in (0,\pi)$ be such that 
$\cot (\th_0/2)\norm{B^\exter(E)}=a$. 
Then 
\[
\s(S(E;H,H^\exter))\subset \bigset{e^{i\th}}{-\th_0<\th<\th_0},
\]
and so 
\[
\norm{S(E;H,H^\exter)-I}\leq |e^{i\th_0}-1|. 
\]
Now an elementary calculation shows that 
\[
|e^{i\th_0}-1| =2 \bigpare{1+a^2\norm{B^\exter(E)}^{-2}}^{-1/2},
\]
which proves \eqref{eq:S-mat-bound}. 
\end{proof}

\begin{lem}\label{non-reso-S-mat-lem}
Let $\delta$, $\alpha$ be the constants from Lemma~\ref{tunnel-est-lem-1}.
Then for all $E$ satisfying $|E-E_0|<\d$ and 
$\dist(E,\s(H^\inter))>e^{-\alpha/\h}$, one has 
\begin{equation}\label{eq:non-reso-S-mat}
\norm{S(E;H,H^\exter)-I}\leq C e^{-\alpha/\h}
\end{equation}
for all sufficiently small $\h>0$. 
\end{lem}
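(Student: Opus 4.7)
The plan is to combine the bound from Lemma \ref{S-mat-bound-lem} with the tunnelling estimates from Lemma \ref{tunnel-est-lem-1} and the resolvent lower bound already established in the proof of Lemma \ref{counting-eq-lem}. The strategy is essentially to show that $B^\exter(E)$ is exponentially small, while $A^\exter(E)$ is bounded away from $1$ by an exponentially small (but much larger) amount, so the ratio in \eqref{eq:S-mat-bound} stays exponentially small.

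First, I would invoke Lemma \ref{tunnel-est-lem-1} to get $\|B^\exter(E)\|\leq e^{-2\alpha/\h}$ directly. The task is then to produce a lower bound of the form $a:=\dist(1,\sigma(A^\exter(E)))\geq c\,e^{-\alpha/\h}$ for some $c>0$ independent of $\h$. To obtain this, I would first recall the argument leading to \eqref{eq:counting-eq-lem-4} in the proof of Lemma \ref{counting-eq-lem}: the operator identity \eqref{eq:counting-eq-lem-3}, together with the hypothesis $\dist(E,\sigma(H^\inter))>e^{-\alpha/\h}$, gives
\[
\dist\bigpare{1,\sigma\bigpare{\sqrt{V_0}(H^\inter+V_0-E)^{-1}\sqrt{V_0}}}\geq \frac{e^{-\alpha/\h}}{1+\|V_0\|}.
\]
Then by the second estimate \eqref{eq:tunnel-est-2} of Lemma \ref{tunnel-est-lem-1}, the self-adjoint operator $A^\exter(E)$ differs from $\sqrt{V_0}(H^\inter+V_0-E)^{-1}\sqrt{V_0}$ in operator norm by at most $e^{-2\alpha/\h}$, which for sufficiently small $\h$ is strictly less than half of the above quantity. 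A standard perturbation argument for spectra of bounded self-adjoint operators then yields
\[
a=\dist(1,\sigma(A^\exter(E)))\geq \frac{e^{-\alpha/\h}}{2(1+\|V_0\|)}
\]
for all sufficiently small $\h$.

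Finally, I would substitute both bounds into \eqref{eq:S-mat-bound}. Using the crude bound $(1+a^{-2}\|B^\exter(E)\|^2)^{-1/2}\leq 1$, we get
\[
\norm{S(E;H,H^\exter)-I}\leq 2a^{-1}\norm{B^\exter(E)}\leq 4(1+\|V_0\|)\,e^{\alpha/\h}\cdot e^{-2\alpha/\h}=Ce^{-\alpha/\h},
\]
which is the desired bound \eqref{eq:non-reso-S-mat}.

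There is no serious obstacle here; the whole argument is a routine assembly of results already in place. The one point requiring mild care is the bookkeeping of the exponential rates: the gap $a$ is only $O(e^{-\alpha/\h})$, while $\|B^\exter(E)\|$ is $O(e^{-2\alpha/\h})$, so the doubled exponent in the tunnelling bound on $B^\exter(E)$ is exactly what allows the ratio to remain of order $e^{-\alpha/\h}$. Had the bound on $B^\exter(E)$ from Lemma \ref{tunnel-est-lem-1} been merely $O(e^{-\alpha/\h})$, the present argument would not close, so it is important to use the factor of $2$ that is built into the statement of \eqref{eq:tunnel-est-1}.
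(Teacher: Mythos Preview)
Your proof is correct and follows essentially the same route as the paper's own argument: both derive the lower bound on $\dist(1,\sigma(A^\exter(E)))$ from \eqref{eq:counting-eq-lem-4} together with the perturbation estimate \eqref{eq:tunnel-est-2}, then feed this and \eqref{eq:tunnel-est-1} into Lemma~\ref{S-mat-bound-lem} with the crude bound $(1+a^{-2}\|B^\exter(E)\|^2)^{-1/2}\leq 1$. Your remark about the factor of $2$ in the exponent of \eqref{eq:tunnel-est-1} is exactly the point.
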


\begin{proof}
Let $\dist(E,\s(H^\inter))>e^{-\alpha/\h}$. 
Then, as in the proof of Lemma~\ref{counting-eq-lem}, 
(see \eqref{eq:counting-eq-lem-4}), we get
$$
\dist\bigpare{1,\s\bigpare{\sqrt{V_0}(H^\inter+V_0-E)^{-1}\sqrt{V_0}}}
>
c\, e^{-\alpha/\h}.
$$
Combining this with the estimate \eqref{eq:tunnel-est-2}
of Lemma~\ref{tunnel-est-lem-1}, we obtain, for sufficiently small $\h$: 
\[
\dist(1,\s(A^\exter(E))) 
\geq 
c\, e^{-\alpha/\h}-e^{-2\alpha/\h}\geq (c/2) e^{-\alpha/\h}.
\]
Combining this with the estimate \eqref{eq:tunnel-est-1} of
Lemma~\ref{tunnel-est-lem-1} and with Lemma~\ref{S-mat-bound-lem}, 
we obtain:
\begin{align*}
\norm{S(E;H,H^\exter)-I} 
&
\leq 
2\norm{B^\exter(E)}\cdot (\dist(1,\s(A^\exter(E))))^{-1} 
\\
&\leq 2 e^{-2\alpha/\h}2c^{-1} e^{\alpha/\h} \leq (4/c)e^{-\alpha/\h},
\end{align*}
as required.
\end{proof}

\subsection{Proof of Theorem~\ref{main-theorem}}
We use Theorem~\ref{sf-ptb-thm} with 
\begin{align*}
&U(t) =S(E';H^\exter,H_0), \\
&\widetilde U(t) =\widetilde S(E'; H,H^\exter), \quad E'=E+\frac{1-t}{t}, \quad t\in (0,1],
\end{align*}
and $U(0)=\widetilde U(0)=I$.  Here $\widetilde S(E;H,H^\exter)$ is as defined in Section~3.1, 
see \eqref{stilde2}.
Then, according to the chain rule 
\eqref{eq:Smatrix-chain-rule}, we have 
\[
M(t)= \widetilde U(t) U(t) = S(E;H,H_0).
\]
By Lemma~\ref{non-reso-S-mat-lem}, the hypothesis \eqref{eq:sf-ptb-thm-1} of Theorem~\ref{sf-ptb-thm} is 
satisfied with $\f=O(e^{-\a/\h})$ as $\h\to 0$. By Lemma~\ref{counting-eq-lem}, 
\[
m=N((-\infty,E);H^\inter).
\]
Now the conclusion of Theorem~\ref{sf-ptb-thm} yields the desired estimates. 
\qed

\appendix

\section{Proof of Proposition~\ref{S-mat-ev-bound}} \label{S-mat-ev-bound-proof}

1) Without loss of generality we assume $\th_2=2\pi-\th_1$. Then is suffices to prove that 
\[
N((-\infty,\cos\th_1);\Re S(E;H^\exter,H_0)) 
=O(\h^{-\eta}), \quad \h\to 0. 
\]
We denote the $q$-th Schatten trace ideal class by $S_q$. Due to the estimate:
\[
N((-\infty,-a);A) \leq a^{-q}\norm{A}_{S_q}^q \quad\text{for all } a>0, \ q\geq 1, 
\]
it suffices to prove that 
\begin{equation}\label{eq:trace-norm-est}
\norm{\Re S(E;H^\exter,H_0)-I}_{S_q}^q = O(\h^{-b}), \quad \h\to 0,
\end{equation}
with some exponents $q\geq 1$ and $b>0$. 

\noindent 
2) According to the stationary representation \eqref{eq:S-mat-rep-1} for the scattering matrix, we have 
\begin{equation}\label{eq:Re-S-mat-rep}
\Re S(E;H^\exter,H_0)-I =
2\pi\  \Im\! \Bigpare{ \mathcal{F}_E V^\exter R^\exter(E+i0) V^\exter \mathcal{F}_E^*}. 
\end{equation}
In order to estimate the norm of the operator in the r.h.s.\ of \eqref{eq:Re-S-mat-rep}, we use the 
non-trapping resolvent estimate of Proposition~\ref{prop5.1} and also 
the following Schatten class estimate: 
\begin{equation}\label{eq:trace-lemma}
\norm{\mathcal{F}_E\jap{x}^{-s}}_{S_q}\leq C \h^{-r},
\end{equation}
if $\frac12<r<s$ and $q=2(d-1)/(2r-1)$. \eqref{eq:trace-lemma} follows from an interpolation between 
\begin{equation}\label{eq:trace-lemma-0} 
\norm{\mathcal{F}_E\jap{x}^{-s_0}}\leq C\h^{-1/2}\quad\text{if }s_0>1/2
\end{equation}
and 
\[
\norm{\mathcal{F}_E\jap{x}^{-s_1}}_{S_2}\leq C\h^{-d/2}\quad \text{if } s_1>d/2.
\]
Proposition~\ref{prop5.1} and the estimate  \eqref{eq:trace-lemma}
 imply 
\[
\norm{\mathcal{F}_E V^\exter R^\exter(E+i0) V^\exter \mathcal{F}_E^*}_{S_q}\leq C\h^{-1-2r}
\]
with $\frac12<r<s=\rho/2$, and $q=(d-1)/(2r-1)$, where $\rho>1$ is the constant in the assumption
\eqref{eq:Short-Range-1}. This proves \eqref{eq:trace-norm-est} with $b=1+2r$. \qed 

\section{Proof of Proposition~\ref{Smatrix-Hoelder}}\label{Smatrix-Hoelder-proof}

Let $s>\frac12$ and $\c=(s-\frac12)(s+\frac12)$. Then we have 
\begin{equation}\label{eq:App-B-1}
\norm{\jap{x}^{-s} R^\exter(E_1)\jap{x}^{-s} -\jap{x}^{-s} R^\exter(E_2)\jap{x}^{-s}}
\leq C \h^{-1-\c} |E_1-E_2|^\c
\end{equation}
if $E_1,E_2\in (E_0-\d,E_0+\d)$ with small $\d>0$. 
\eqref{eq:App-B-1} is shown by observing the $\h$-dependence of the constants
in the proofs of the key statements of the Mourre theory. 
See, e.g., \cite{HN}, Section~2. There the $\h$-dependence of the H\"older 
continuity is not investigated, but we easily observe the estimate by
scaling.

Similarly, we have 
\[
\norm{\mathcal{F}_{E_1}\jap{x}^{-s}-\mathcal{F}_{E_2}\jap{x}^{-s}}^2 \leq C \h^{-1-\c}|E_1-E_2|^\c. 
\]
Now combining these estimates,  \eqref{eq:trace-lemma-0}, and Proposition~\ref{prop5.1}
with the stationary representation formula of the scattering matrix \eqref{eq:S-mat-rep-1}, 
we conclude the proof of the  assertion. 
\qed

\section{Proof of Proposition~\ref{prop.res-error}}\label{app.c}

We use an argument which is essentially due to \cite{CDKS}.

\begin{lem}\label{lma.appc}
Let $A_1$, $A_2$ be self-adjoint operators and let $\lambda\in\re$ be 
such that $\e=\dist(\lambda,\sigma(A_1))>0$. 
Suppose that 
\begin{equation}
\norm{(A_1-\lambda-i\e)^{-1}-(A_2-\lambda-i\e)^{-1}}< \frac1{2\e}. 
\label{appc1}
\end{equation}
Then $\lambda\notin\sigma(A_2)$.
\end{lem}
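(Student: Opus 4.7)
The plan is to establish the contrapositive: assuming $\norm{R_1 - R_2} < 1/(2\e)$, where $R_j := (A_j - \lambda - i\e)^{-1}$, the goal is to show that the operator $R_2 - i/\e$ is boundedly invertible. Since $R_2$ is normal (being a bounded function of the self-adjoint $A_2$), the spectral mapping theorem applied to $f(x) = 1/(x - \lambda - i\e)$ shows that $i/\e \in \s(R_2)$ if and only if $\lambda \in \s(A_2)$, so the conclusion follows from the invertibility of $R_2 - i/\e$.

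The first main step is a sharp lower bound on $\dist(i/\e, \s(R_1))$. A direct calculation gives
\[
\Bigabs{f(x) - \frac{i}{\e}} = \frac{|x - \lambda|}{\e\sqrt{(x - \lambda)^2 + \e^2}},
\]
and the right-hand side is strictly increasing in $|x - \lambda|$, taking the value $1/(\e\sqrt{2})$ at $|x - \lambda| = \e$. Combined with the hypothesis $|x - \lambda| \geq \e$ for all $x \in \s(A_1)$ (and the harmless observation $|0 - i/\e| = 1/\e > 1/(\e\sqrt{2})$, to handle the possible spectral point $0$ when $A_1$ is unbounded), this yields $\dist(i/\e, \s(R_1)) \geq 1/(\e\sqrt{2})$. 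Normality of $R_1$ then gives the operator bound $\bignorm{(R_1 - i/\e)^{-1}} \leq \e\sqrt{2}$.

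The second main step is a Neumann series factorisation:
\[
R_2 - \frac{i}{\e} = \Bigl(R_1 - \frac{i}{\e}\Bigr)\Bigl[I + \Bigl(R_1 - \frac{i}{\e}\Bigr)^{-1}(R_2 - R_1)\Bigr].
\]
By the hypothesis \eqref{appc1}, the perturbation in the bracket has norm strictly less than $\e\sqrt{2}\cdot 1/(2\e) = 1/\sqrt{2} < 1$, so the bracket is invertible, and hence so is $R_2 - i/\e$.

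The main subtlety to watch is that a naive triangle-inequality bound on operator norms is not sharp enough: one has $\norm{R_1} \leq 1/(\e\sqrt{2})$ and the hypothesis yields only $\norm{R_2} < 1/(\e\sqrt{2}) + 1/(2\e) \approx 1.2/\e$, whereas $\lambda \in \s(A_2)$ forces merely $\norm{R_2} \geq 1/\e$, so no contradiction is produced from norms alone. The point is that $\s(R_1)$ lies on the circle $\bigset{w \in \co}{|w - i/(2\e)| = 1/(2\e)}$ and is separated from the special point $i/\e = f(\lambda)$ in a way that gives a strictly better bound in \emph{euclidean} distance in $\co$ than in operator norm. Exploiting this separation via the normality of $R_1$ is what makes the constant $1/(2\e)$ in the hypothesis sharp enough for the conclusion.
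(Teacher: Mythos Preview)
Your proof is correct and follows essentially the same route as the paper: both reduce the claim to showing that $R_2-i/\e$ is boundedly invertible, bound $\bignorm{(R_1-i/\e)^{-1}}$ (the paper's $\norm{N_1(0)}$), and then perturb via a Neumann-type factorisation using the hypothesis $\norm{R_1-R_2}<1/(2\e)$. The only difference is cosmetic: the paper obtains $\norm{N_1(0)}\le 2\e$ from the algebraic identity $N_1(0)=i\e+\e^2 A_1^{-1}$ and the triangle inequality, whereas you use normality and the spectral mapping theorem to get the sharper bound $\e\sqrt{2}$, which is not needed here but does no harm.
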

\begin{proof}
For simplicity of notation, assume $\lambda=0$. 
Set 
$$
N_j(z)=((A_j-i\e)^{-1}-(z-i\e)^{-1})^{-1}, 
\quad 
j=1,2, \quad z\in\co.
$$
By a direct calculation, 
\begin{equation}
N_j(z)=-(z-i\e)-(z-i\e)^2(A_j-z)^{-1}.
\label{appc2}
\end{equation}
Using the last formula and our assumptions $\e=\dist(\lambda,\sigma(A_1))$, we get
\begin{equation}
\norm{N_1(0)}\leq \e+\e^2\norm{A_1^{-1}}\leq 2\e.
\label{appc3}
\end{equation}
By \eqref{appc2}, it suffices to check the boundedness of the norm
of $N_2(z)$ as $z\to0$. Using the resolvent identity, we get
$$
N_2(z)(I-DN_1(z))=N_1(z), 
$$
where $D=(A_1-i\e)^{-1}-(A_2-i\e)^{-1}$. 
By \eqref{appc1} and \eqref{appc3} we get $\norm{DN_1(0)}<1$ 
and so the norm of $N_2(z)$ is bounded when $z\to0$. 
\end{proof}

\begin{proof}[Proof of Proposition~\ref{prop.res-error}]
1)
For $j=1,2$, let $\wV_j$ be the smooth function given by
$$
\wV_j(x)=
\begin{cases}
V^\exter(x) & \text{ if $x\in\Omega_2$}, 
\\
V^\inter_j(x) & \text{ if $x\in\re^d\setminus\Omega_2$}.
\end{cases}
$$
We note that $\wV_j\geq E_+'>E_+$ everywhere and therefore
\begin{equation}
\norm{(H_0+\wV_j-z)^{-1}}\leq C\quad \text{ uniformly in $\Re z\leq E_+$. }
\label{appc4}
\end{equation}
Next, let $W=V_2^\inter-V_1^\inter$, which is supported inside 
$\mathcal{G}^\exter(E_+)$, and let $\chi_W$ be the characteristic function 
of $\supp W$. 
We have 
$$
(H_0+V_j^\inter-z)^{-1}-(H_0+\wV_j-z)^{-1}
=
(H_0+V_j^\inter-z)^{-1}
(\wV_j-V^\inter_j)
(H_0+\wV_j-z)^{-1}, 
$$
and therefore
\begin{multline*}
\norm{(H_0+V_j^\inter-z)^{-1}\chi_W}
\\
\leq
\norm{(H_0+V_j^\inter-z)^{-1}\chi_W}
+
\norm{(H_0+V_j^\inter-z)^{-1}}
\norm{(\wV_j-V^\inter_j)
(H_0+\wV_j-z)^{-1}\chi_W}.
\end{multline*}
Note that the supports of $W$ and $\wV_j-V_j^\inter$ are disjoint. 
Thus, if $\Re z\leq E_+$, then by a tunnelling estimate
(a version of Proposition~\ref{prop5.2} with complex $E$, 
see \cite[Theorem~2.5]{Na0})
there exists $\nu>0$ such that 
$$
\norm{(\wV_j-V_j^\inter)(H_0-\wV_j-z)^{-1}\chi_W}
\leq 
Ce^{-\nu/\h}.
$$
Using \eqref{appc4}, we obtain that
\begin{equation}
\norm{(H_0+V_j^\inter-z)^{-1}\chi_W}\leq C'
\label{appc5}
\end{equation}
for some $C'>0$, provided that $\Re z\leq E_+$ and 
$\norm{(H_0+V_j^\inter-z)^{-1}}\leq e^{-\nu/\h}$. 

2) 
Now we use a \emph{zooming argument} of \cite[Section IV]{CDKS}
to compare the eigenvalues of $H_0+V_1^\inter$ and $H_0+V_2^\inter$. 
Let $\lambda<E_+$ be such that $\dist(\lambda, \sigma(H_0+V_1^\inter))>e^{-\nu/\h}$,
and set $\e=e^{-\nu/\h}$. 
By \eqref{appc5}, we get
\begin{multline*}
\norm{
(H_0+V_1^\inter-\lambda-i\e)^{-1}
-
(H_0+V_2^\inter-\lambda-i\e)^{-1}
}
\\
=
\norm{(H_0+V_1^\inter-\lambda-i\e)^{-1}W(H_0+V_2^\inter-\lambda-i\e)^{-1}}
\leq C.
\end{multline*}
Now using Lemma~\ref{lma.appc} with $A_j=H_0+V_j^\inter$ and 
$\e=e^{-\nu/\h}$, we get that $\lambda\notin\sigma(H_2+V_2^\inter)$.
Finally, using a standard argument involving continuous deformation of
$V_1^\inter$ into $V_2^\inter$, we get the required statement. 
\end{proof}



\begin{thebibliography}{99}
\bibitem{Ag}
Agmon, S.:
\emph{Lectures on exponential decay of solutions of second-order elliptic equations,}
Mathematical Notes, 29. Princeton University Press, Princeton, NJ; 
University of Tokyo Press, Tokyo, 1982.
\bibitem{APS} Atiyah, M. F., Patodi, V. K., Singer, I. M.:
\emph{Spectral asymmetry and Riemannian geometry. III.}
Math. Proc. Cambridge Philos. Soc. {\bf 79} (1976), 71--99.
\bibitem{BY} Birman, M. Sh., Yafaev, D. R.:
\emph{The spectral shift function. The papers of M. G. Kre\v{\i}n and their further development.}
St. Petersburg Math. J. {\bf 4} (1993), 833--870.
\bibitem{CDKS} Combes, J.-M., Duclos, P., Klein, M., Seiler, R.:
\emph{The shape resonance.} 
Comm. Math. Phys. {\bf 110} (1987), 215--236.
\bibitem{DS}
Dimassi, M., Sj\"ostrand, J.:
\emph{Spectral asymptotics in the semi-classical limit.}
LMS Lecture notes series, {\bf 268}. Cambridge University Press, 1999.
\bibitem{GM}
G\'erard, C., Martinez, A.,
\emph{Principe d'absorption limite pour des op\'erateurs de Schršdinger \`a longue port\'ee.}
Comptes rendus de l'Acad\'emie des sciences, {\bf 306} (1988), 121--123.
\bibitem{GMR}
G\'erard, C., Martinez, A., Robert, D.:
\emph{Breit-Wigner formulas for the scattering phase and the 
total scattering cross-section in the semi-classical limit.}
Comm. Math. Phys. {\bf 121} (1989), no. 2, 323--336. 
\bibitem{GS}
G\'erard, C., Sigal, I.M.: 
\emph{Space-time picture of semiclassical resonances.}
Comm. Math. Phys. {\bf 145} (1992), 281--328.
\bibitem{HS} Helffer, B., Sj\"ostrand, J.:
\emph{R\'esonances en limite semi-classique.}
M\'em. Soc. Math. France (N.S.) {\bf 24--25} (1986), iv+228 pp.
\bibitem{HS2} Helffer, B., Sj\"ostrand, J.:
\emph{Multiple wells in the semi-classical limit I.}
Commun. P.~D.~E. {\bf 9} (1984) 337--408. 
\bibitem{HN} Hislop, P. D., Nakamura, S.:
\emph{Semiclassical resolvent estimates.}
Ann. Inst. H. Poincar\'e Phys. Th\'eor. {\bf 51} (1989), 187--198.
\bibitem{HiS}
Hislop, P. D.; Sigal, I. M.:
\emph{Semiclassical theory of shape resonances in quantum mechanics.}
Mem. Amer. Math. Soc. {\bf 78} (1989), no. 399.
\bibitem{Kato} 
Kato, T.:
\emph{Monotonicity theorems in scattering theory.}
Hadronic J. {\bf 1} (1978), no. 1, 134--154. 
\bibitem{Landau-Lifshits}
Landau, L. D. and Lifshitz, E. M.:
\emph{Quantum Mechanics (Non-Relativistic Theory) Course of Theoretical Physics , Volume 3.}
Pergamon Press, 1958. 
\bibitem{MRS}
Martinez, A., Ramond, T., Sj\"ostrand, J.:
\emph{Resonances for nonanalytic potentials.} 
Anal. PDE {\bf 2} (2009), no. 1, 29--60. 
\bibitem{Na2}
Nakamura, S.: 
\emph{Scattering theory for the shape resonance model. I. Nonresonant energies.
II. Resonance scattering.}
Ann. Inst. H. Poincare Phys. Theor. {\bf 50} (1989), no. 2, 115--131, 133--142.
\bibitem{Na3}
Nakamura, S.: 
\emph{Shape resonances for distortion analytic Schršdinger operators,}
Commun. P. D. E. {\bf 14} (1989), 1385--1419.
\bibitem{Na0} Nakamura, S.: 
\emph{Agmon-type exponential decay estimates for pseudodifferential operators.}
J. Math. Sci. Univ. Tokyo {\bf 5} (1998), 693--712. 
\bibitem{Na1} Nakamura, S.: 
\emph{Spectral shift function for trapping energies in the semiclassical limit.}
Commun. Math. Phys. {\bf 208} (1999),173--193. 
\bibitem{Newton}
 Newton, R. G.: 
 \emph{Scattering theory of waves and particles.} 
 McGraw-Hill Book Co., New York-Toronto, Ont.-London 1966
\bibitem{Push1} 
Pushnitski, A.: 
\emph{The spectral shift function and the invariance principle.}
J. Funct. Anal. {\bf 183} no.2 (2001), 269--320.
\bibitem{Push2} 
Pushnitski, A.: 
\emph{The Birman-Schwinger principle on the essential spectrum.}
J. Funct. Anal. {\bf 261} (2011), 2053--2081.
\bibitem{RT1} 
Robert, D., Tamura, H.:  
\emph{Semiclassical bounds for resolvents of Schr\"odinger operators and asymptotics for scattering phases.} 
Comm. Partial Differential Equations {\bf 9} (1984), 1017--1058.
\bibitem{RT2} 
Robert, D., Tamura, H.:  
\emph{Semiclassical estimates for resolvents and asymptotics for total scattering cross-sections.}
Ann. Inst. H. Poincar\'e Phys. Th\'eor. {\bf 46} (1987), 415--442.
\bibitem{RoSa} 
Robbin, J., Salamon, D.:  
\emph{The spectral flow and the Maslov index.}
Bull. London Math. Soc. {\bf 27} (1995),  1--33.
\bibitem{SY} 
Sobolev, A. V., Yafaev, D. R.: 
\emph{On the quasiclassical limit of the total scattering cross section in nonrelativistic quantum mechanics.}
Ann. Inst. H. Poincar\'e Phys. Th\'eor. {\bf 44} (1986), 195--210.
\bibitem{Y} 
Yafaev, D. R.:
\emph{Resonance scattering on a negative potential.}
Journal of Mathematical Sciences {\bf 32} no. 5 (1986), 549--556.
\bibitem{Yafaev} 
Yafaev, D. R.:
\emph{Mathematical scattering theory. Analytic theory.}
A.M.S. 2009.


\end{thebibliography}
\end{document}